\numberwithin{equation}{section}
\newtheorem{thm}{Theorem}[section]
\newtheorem{lem}[thm]{Lemma}
\newtheorem{cor}[thm]{Corollary}
\newtheorem{prop}[thm]{Proposition}
\newtheorem{defn}[thm]{Definition}
\theoremstyle{definition}
\newtheorem{rem}[thm]{Remark}
\theoremstyle{remark}
\newcommand{\ds}{\displaystyle}
\newcommand{\abs}[1]{\left\vert#1\right\vert}
\newcommand{\R}{\mathbb{R}}
\newcommand{\de}{\partial}
\DeclareMathOperator{\spt}{spt}
\patchcmd{\abstract}{\scshape\abstractname}{\textbf{\abstractname}}{}{}
\def\@makefnmark{} 
\title[First and Second Steklov-Dirichlet eigenvalues]{Estimates for the first and second Steklov-Dirichlet eigenvalues}
\author[R. Sannipoli]{Rossano Sannipoli}
\address{Dipartimento di Matematica ``Tullio Levi-Civita", Universit\'a degli Studi di Padova, Via Trieste 63, 35131 Padua, Italy.}
\email{rossano.sannipoli@unipd.it}
\begin{document}
\begin{abstract}
In this paper, we deal with the Steklov-Dirichlet eigenvalue problem for the Laplacian in annular domains. More precisely, we consider \( \Omega_r = \Omega_0 \setminus \overline{B}_r \), where \( \Omega_0 \subset \mathbb{R}^n \), \( n \geq 2 \), is an open, bounded set with a Lipschitz boundary, and \( B_r \) is the ball centered at the origin with radius \( r > 0 \), such that \( \overline{B}_r \subset \Omega_0 \). In the first part of the paper, we focus on the first Steklov-Dirichlet eigenvalue \( \sigma_1(\Omega_r) \) and prove that the sequence of corresponding normalized eigenfunctions converges to a particular constant as \( r \to 0^+ \). This will allow us to prove an isoperimetric inequality for \( \sigma_1(\Omega_r) \) when \( r \) is small enough, under a measure constraint. The second part is focused on the second Steklov-Dirichlet eigenvalue \( \sigma_2(\Omega_r) \). We prove that it converges to the first non-trivial Steklov eigenvalue \( \overline{\sigma}_1(\Omega_0) \) of the non-perforated domain \( \Omega_0 \). This result, together with the Brock and Weinstock inequalities, respectively, allows us to prove two isoperimetric inequalities for small holes.\\ 

\noindent\textsc{MSC 2020:} 35B40, 35J25, 35P15. \\
\textsc{Keywords}:  Laplacian eigenvalue, Steklov-Dirichlet boundary conditions, Isoperimetric inequalities.
\end{abstract}
\maketitle

\section{Introduction}
Let $\Omega_0$ be an open, bounded and connected set with Lipschitz boundary and let us consider a ball $B_r$ of radius $r>0$ centered at the origin, such that $\overline{B}_r \subset \Omega_0$. In $\Omega= \Omega_0\setminus B_R$ we consider the following Steklov-Dirichlet eigenvalue problem for the Laplacian 
\begin{equation}\label{proSD}
\begin{cases}
\Delta u=0 & \mbox{in}\ \Omega\vspace{0.2cm}\\
\dfrac{\de u}{\de \nu}=\sigma u&\mbox{on}\ \partial\Omega_0\vspace{0.2cm}\\ 
u=0&\mbox{on}\ \partial B_{r}, 
\end{cases}
\end{equation}
where $\nu$ is the outer unit normal to $\partial \Omega$. The spectrum of \eqref{proSD} is related to the one of the so-called \textit{Neumann-to-Dirichlet} operator, which is compact and self-adjoint. The spectrum is discrete, positively divergent and the sequence of eigenvalues can be ordered as follows 
(see for instance \cite{Agr2006})
\[
0<\sigma_{1}(\Omega)\leq\sigma_2(\Omega)\leq... \nearrow +\infty.
\]

\noindent In particular, for $k\ge 1$, the $k$-th eigenvalue can be computed applying the usual min-max Theorem (see \cite{Agr2006,michetti2022steklov,BCV2023})
\begin{equation}\label{eq:carvarSD}
    \sigma_k(\Omega)=
\inf_{\substack{V_{k}\subset H^{1}_{\partial B_r}(\Omega)}} \,\, \sup_{0\neq v\in V_{k}} \frac{\ds \int_{\Omega}\abs{\nabla v}^2\,dx}{\ds \int_{\partial \Omega_0} v^2\,d\mathcal{H}^{n-1}},
\end{equation}
where $V_{k}$ are the $k$-dimensional subspaces of $H^1_{\partial B_r }(\Omega)$, which is the space of $H^1(\Omega)$ functions that have null trace on $\partial \Omega$ (see the precise definition in subsection \eqref{subsec:SD}). We will denote by $m_k$ the multiplicity of the $k$-th eigenvalue $\sigma_k$.\\

When $r=0$, that is $\Omega=\Omega_0$ has no holes, we recover the classical Steklov eigenvalue problem in $\Omega_0$, i.e.
\begin{equation}\label{proS}
\begin{cases}
\Delta u=0 & \mbox{in}\ \Omega_0\vspace{0.2cm}\\
\dfrac{\de u}{\de \nu}=\bar{\sigma} u&\mbox{on}\ \partial\Omega_0\vspace{0.2cm}. 
\end{cases}
\end{equation}
Shape optimization problems concerning the first non-trivial eigenvalue trace back to R. Weinstock, who proved in \cite{Wein1954} an isoperimetric inequality in dimensions two. More precisely, he showed that the disc maximizes the first non-trivial Steklov-Laplacian eigenvalue in the class of planar simply connected sets with a perimeter constraint. Sixty years later, the result has been generalized in any dimension in \cite{BFNT2021}, where the authors proved the Weinstock inequality, restricting the class of admissible sets to the class of convex sets with fixed perimeter.
Regarding the measure costraint, in \cite{Bro2001} it is proved that the ball is still a maximizer for the first non-trivial Steklov eigenvalue among all bounded open sets with Lipschitz boundary.\\

Regarding the first Steklov-Dirichlet eigenvalue, several studies have focused on shape optimization issues. When $\Omega_0$ is a ball, in \cite{ftouh2022place, verma2018bounds} it is proved that $\sigma_1(\Omega)$ achieves the maximum when the two balls are concentric, i.e. when $\Omega$ is the spherical
shell. The same result has been proved in \cite{hong2020shape} by using a shape derivative approach and then it has been extended in the class of convex centrally symmetric sets in \cite{gavitone2024monotonicity}. Up to our knowledge, it is known that the spherical shell is a maximizer only if $\Omega_0$ is in suitable
classes of sets, when the volume of $\Omega_0$ and the radius of the inner ball are fixed. In \cite{PaPiSa} it is shown the maximality in the class of nearly spherical sets. In \cite{gavitone2021isoperimetric} an existence result in the class of convex sets is given with the proof of upper and lower bounds for $\sigma_1(\Omega)$ in terms of the minimum and maximum distance of the origin from the boundary $\partial \Omega_0$. Moreover, regarding the maximality, it is proved that the spherical shell is the optimal shape in the class of open, bounded, convex sets $\Omega_0$ with
Lipschitz boundary only if $\Omega_0$ is confined in a suitable ball (whose radius depends on the dimension of the space), when both the volume and the radius of inner ball are fixed. Lately, we stress that, in \cite{GS2023}, the Dirichlet boundary condition has been replaced by a Robin boundary condition on the inner ball: here the main properties of the first Steklov-Robin eigenvalue and the associated eigenfunctions are studied, with particular attention to their asymptotic behaviour when the Robin parameter goes to zero and infinity.  Moreover, for analogous problems with different boundary conditions on the connected components of $\partial \Omega$, see for instance \cite{BS2025,BV2024,cito2025,cito2024, DP2020,GP2024,PPT}.\\

The first result of this paper is related to the first Steklov-Dirichlet eigenvalue. In \cite{gavitone2021isoperimetric} the authors prove an isoperimetric inequality involving the first eigenvalue, when $\Omega_0$ is a suitable convex set: to be more precise, $\Omega_0$, must be contained in a ball $B_{R(n,r)}$, where the radius $R(n,r)>0$ depends on the dimension of the space and on the radius of the hole $B_r$. Our first Theorem allows us to prove an isoperimetric inequality when the radius of the hole $B_r$ is small enough. 
\begin{thm}\label{cor:isoperimetricsmallradius1}
 Let $\Omega_r=\Omega_0\setminus\overline{B}_r$, where $\Omega_0$ is an open, bounded and connected set in $\mathbb R^n$ with Lipschitz boundary and $B_r$ a ball of radius $r>0$ centered at the origin, such that $\overline{B}_r \subset \Omega_0$. There exists a $r_1^M=r_1^M(\Omega_0)$ depending only on the geometry of $\Omega_0$, such that for every $r\in (0,r^M_1)$, we get
   \begin{equation}\label{eq:isopfirstSDmeas}
       \sigma_1(\Omega_r) \le \sigma_1(A_{r,R_M}),
   \end{equation}
   where $R_M>0$ is the radius of the ball with the same measure as $\Omega_0$. The equality case holds if and only if $\Omega_0=B_{R_M}$.
   \end{thm}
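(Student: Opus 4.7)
The plan is to combine a variational upper bound for $\sigma_1(\Omega_r)$ with the explicit expression for $\sigma_1(A_{r,R_M})$ and then reduce the comparison, for small $r$, to the isoperimetric inequality for the outer perimeter. Let $w_r\in H^{1}(\Omega_r)$ denote the capacity potential of $B_r$ in $\Omega_0$, that is, the unique harmonic function in $\Omega_r$ with $w_r=1$ on $\partial B_r$ and $w_r=0$ on $\partial\Omega_0$, and set $c_{\Omega_0}:=1/\sqrt{P(\Omega_0)}$, the constant singled out by the preceding convergence theorem as the $H^{1}$-limit of the normalized first eigenfunctions. The function $v_r:=c_{\Omega_0}(1-w_r)$ vanishes on $\partial B_r$ and is constant (equal to $c_{\Omega_0}$) on $\partial\Omega_0$, so plugging it into the Rayleigh quotient \eqref{eq:carvarSD} gives
\[
\sigma_1(\Omega_r)\;\le\;\frac{\int_{\Omega_r}|\nabla v_r|^2\,dx}{\int_{\partial\Omega_0}v_r^2\,d\mathcal{H}^{n-1}}\;=\;\frac{\operatorname{cap}(B_r,\Omega_0)}{P(\Omega_0)},
\]
where $\operatorname{cap}(B_r,\Omega_0):=\int_{\Omega_r}|\nabla w_r|^2\,dx$ is the relative capacity.

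Next, I would compute $\sigma_1(A_{r,R_M})$ explicitly. Since the shell $A_{r,R_M}=B_{R_M}\setminus\overline{B}_r$ is radially symmetric and the first Steklov--Dirichlet eigenvalue is simple with a positive eigenfunction, that eigenfunction must be radial; separation of variables in the radial Laplace equation with the Dirichlet condition on $\partial B_r$ and the Steklov condition on $\partial B_{R_M}$ gives, for $n\ge 3$,
\[
\sigma_1(A_{r,R_M})\;=\;\frac{(n-2)R_M^{1-n}}{r^{2-n}-R_M^{2-n}}\;=\;\frac{\operatorname{cap}(B_r,B_{R_M})}{P(B_{R_M})},
\]
with the analogous logarithmic identity for $n=2$. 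In other words, the variational upper bound of the previous step is attained in the radial case, so \eqref{eq:isopfirstSDmeas} reduces to showing
\[
\frac{\operatorname{cap}(B_r,\Omega_0)}{P(\Omega_0)}\;\le\;\frac{\operatorname{cap}(B_r,B_{R_M})}{P(B_{R_M})}.
\]

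To close the argument I would use two ingredients. First, the volume constraint $|\Omega_0|=|B_{R_M}|$ and the classical isoperimetric inequality give $P(\Omega_0)\ge P(B_{R_M})$, with equality if and only if $\Omega_0=B_{R_M}$. Second, the standard small-hole asymptotic $\operatorname{cap}(B_r,D)/r^{n-2}\to (n-2)n\omega_n$ as $r\to 0^{+}$ (with the corresponding logarithmic behavior when $n=2$), valid for any bounded Lipschitz domain $D$ containing the origin, implies $\operatorname{cap}(B_r,\Omega_0)/\operatorname{cap}(B_r,B_{R_M})\to 1$. If $\Omega_0=B_{R_M}$ then $\Omega_r=A_{r,R_M}$ and equality is immediate; otherwise $P(\Omega_0)/P(B_{R_M})>1$ is a fixed constant strictly greater than $1$, while the capacity ratio tends to $1$, so there exists $r_1^{M}=r_1^{M}(\Omega_0)>0$ such that for every $r\in(0,r_1^{M})$ the capacity--perimeter inequality holds strictly, yielding $\sigma_1(\Omega_r)<\sigma_1(A_{r,R_M})$. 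The main technical hurdle is producing an effective value of $r_1^{M}$: this requires quantitative control of the next-to-leading term in the capacity expansion, which depends on shape-sensitive invariants of $\Omega_0$ such as its Robin function.
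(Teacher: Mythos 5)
Your proposal is correct in its core mechanism but follows a genuinely different route from the paper. The paper transplants the explicit radial eigenfunction $v_r$ of the shell $A_{r,R_M}$ (extended radially to a large shell containing $\Omega_r$) into the Rayleigh quotient for $\sigma_1(\Omega_r)$: the numerator is handled by the Hardy--Littlewood inequality, using that $\abs{\nabla v_r}^2$ is radially decreasing and $|\Omega_r|=|A_{r,R_M}|$, and the denominator by Lemma \ref{rem:eigenfunctionradialindependence}, i.e.\ the convergence $\int_{\partial\Omega_0}v_r^2\,d\mathcal H^{n-1}\to P(\Omega_0)/P(B_{R_M})$, after which the isoperimetric inequality provides the margin $\eta>0$ needed for small $r$. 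You instead test with the capacity potential of $B_r$ relative to $\Omega_0$ itself, observe that the shell eigenvalue is exactly $\operatorname{cap}(B_r,B_{R_M})/P(B_{R_M})$ (your identity checks against \eqref{eq:radeigenvf}, since the shell eigenfunction is proportional to $1-w_{\mathrm{cap}}$), and reduce the theorem to the comparison $\operatorname{cap}(B_r,\Omega_0)/P(\Omega_0)\le\operatorname{cap}(B_r,B_{R_M})/P(B_{R_M})$, settled for small $r$ by the universal small-hole asymptotics of capacity together with the isoperimetric inequality. Your route is in a sense more self-contained: the constant $c_{\Omega_0}$ cancels from your Rayleigh quotient, so you do not actually need Proposition \ref{prop:firsteig} or Lemma \ref{rem:eigenfunctionradialindependence}, only monotonicity of capacity under domain inclusion and the explicit shell computation, which yield the asymptotics by sandwiching $\Omega_0$ between two concentric balls. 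What the paper's route buys is a numerator inequality (Hardy--Littlewood) that is valid and strict for every $r$, not just asymptotically, which localizes exactly where the measure constraint enters. Also note that your closing worry about an effective $r_1^M$ is moot: the statement only asserts existence of $r_1^M$, so no quantitative capacity expansion is required.

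There is one caveat, which you share with the paper's own proof but which is structurally worse for your argument: both invoke ``isoperimetric equality iff $\Omega_0=B_{R_M}$'', whereas the equality case of the isoperimetric inequality only forces $\Omega_0$ to be \emph{some} ball of radius $R_M$, not necessarily the one centered at the origin where the hole sits. If $\Omega_0=B_{R_M}(x_0)$ with $x_0\neq 0$, then $P(\Omega_0)=P(B_{R_M})$ and your fixed gap $P(\Omega_0)/P(B_{R_M})>1$ disappears; worse, the inequality you reduced the theorem to is then \emph{false}: the Robin function of a ball is minimized at its center, so $\operatorname{cap}(B_r,B_{R_M}(x_0))>\operatorname{cap}(B_r,B_{R_M})$ for small $r$, hence $\operatorname{cap}(B_r,\Omega_0)/P(\Omega_0)>\operatorname{cap}(B_r,B_{R_M})/P(B_{R_M})$. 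The conclusion of the theorem still holds in that configuration (the concentric shell maximizes $\sigma_1$ among eccentric annuli, by the results of \cite{ftouh2022place,verma2018bounds} cited in the introduction), but your test function is too lossy to detect it, so your reduction cannot be patched without changing the test function. The paper's proof suffers the same degeneration (its choice of $\eta$ in \eqref{eq:choiceofeta} becomes zero), but its Hardy--Littlewood step remains strictly monotone there, leaving more room for repair. If you want your argument to cover the equality case exactly as stated, you must treat translated balls separately.
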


We highlight that, differently from the result contained in \cite{gavitone2021isoperimetric}, here we manage to prove inequality \eqref{eq:isopfirstSDmeas} when $\Omega_0$ lives in a larger class of sets, also dropping its confinement into a suitable ball. We stress that if $\Omega_0$ is in  the class of open, bounded and connected sets with Lipschitz boundary, then $\sigma_1(\Omega_0)$ is well defined and always bounded above when fixing the measure of $\Omega_0$ and the radius $r>0$ (see \cite[Proposition $2.2$ and $2.4$]{paoli2020stability}).    The main ingredient of the proof is the convergence in $H^1(\Omega_0)$ of the suitably normalized eigenfunctions corresponding to $\sigma_1(\Omega_r)$ to particular constants (see proposition \ref{prop:firsteig}).\\

The second main result of this paper is related to the second Steklov-Dirichlet eigenvalue. It all started with a recent paper  \cite{BCV2023}, where the authors generalized the upper and lower bounds, contained in \cite{gavitone2021isoperimetric}, for doubly connected star-shaped domains in non-Euclidean
space forms, and obtained an isoperimetric inequality involving the $k$-th Steklov-Dirichlet eigenvalue, for $2\le k\le n+1$, under suitable geometrical assumption and with a measure constraint. To be more precise, they restricted their study in the class of open, bounded sets in $\mathbb R^n$, which are symmetric of order $4$, i.e., which are invariant under rotations of an angle of $\pi/2$, in the $\{x_i,x_j\}$ plane, for every $i,j\in\{1,...,n\}$. What they proved is the following
\begin{thm}[S. Basak, A. Chorwadwala, S. Verma]
    Let $\Omega_0$ be an open, bounded set in $\mathbb R^n$, symmetric of order $4$. Let $\Omega_r=\Omega_0\setminus\overline{B}_r$, where $B_r$ is a ball of radius $r>0$ centered at the origin, such that $\overline{B}_r \subset \Omega_0$. Then, for every $2\le k\le n+1$
   \begin{equation}\label{eq:isopVerma}
       \sigma_k(\Omega_r) \le \sigma_k(A_{r,R})=\sigma_2(A_{r,R}),
   \end{equation}
   where $A_{r,R}$ is the spherical shell having the same measure as $\Omega$.
\end{thm}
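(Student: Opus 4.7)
The plan is to establish the single bound $\sigma_{n+1}(\Omega_r)\le \sigma_2(A_{r,R})$; the full range $2\le k\le n+1$ then follows from the monotonicity $\sigma_2\le\cdots\le\sigma_{n+1}$. Separation of variables on the shell identifies the second eigenspace of $A_{r,R}$ as the span of $\phi_i(x) = f(|x|)\,x_i/|x|$, $i=1,\dots,n$, where the explicit radial profile $f(\rho) = \rho - r^n\rho^{1-n}$ satisfies $f(r) = 0$ and $f'(R) = \sigma_2(A_{r,R})\, f(R)$; this already accounts for the equality $\sigma_2(A_{r,R}) = \cdots = \sigma_{n+1}(A_{r,R})$.

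I would transplant these functions to $\Omega_r$ by the same explicit formula $\psi_i(x) := f(|x|)\,x_i/|x|$: each $\psi_i$ is globally harmonic and lies in $H^1_{\partial B_r}(\Omega_r)$ because $f(r)=0$, so no cutoff is needed. Together with the first Steklov--Dirichlet eigenfunction $u_1$ of $\Omega_r$, they span an $(n+1)$-dimensional test space $V$ to be plugged into \eqref{eq:carvarSD}. The order-$4$ symmetry of $\Omega_0$ enters next: the $\pi/2$-rotation in the $\{x_i,x_j\}$-plane preserves $\Omega_r$ and sends $\psi_i$ to $\pm\psi_j$, which forces
\[
\int_{\partial\Omega_0}\!\psi_i\psi_j\,d\mathcal{H}^{n-1} = 0 \quad\text{and}\quad \int_{\Omega_r}\!\nabla\psi_i\cdot\nabla\psi_j\,dx = 0, \qquad (i\ne j).
\]
Iterating the rotation to a $\pi$-rotation flips $\psi_i\mapsto -\psi_i$ and leaves $u_1$ invariant (by simplicity of $\sigma_1(\Omega_r)$ and positivity of $u_1$, which force it to inherit the symmetries of $\Omega_r$), whence $\int_{\partial\Omega_0}u_1\psi_i = 0$; harmonicity of $u_1$ together with $\psi_i|_{\partial B_r}=0$ upgrades this to $\int_{\Omega_r}\nabla u_1\cdot\nabla\psi_i = 0$. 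Consequently, the Rayleigh quotient is diagonal on $V$, and by the same symmetry argument its value on each $\psi_i$ is a common number $N/D$, where
\[
N := \int_{\Omega_r}\!\Bigl[(f')^2 + (n-1)\,\frac{f^2}{|x|^2}\Bigr] dx, \qquad D := \int_{\partial\Omega_0}\! f(|x|)^2\,d\mathcal{H}^{n-1}.
\]

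A direct algebraic simplification specific to this explicit $f$ yields
\[
(f')^2 + (n-1)\,\frac{f^2}{\rho^2} = n\Bigl[1 + (n-1)\,\frac{r^{2n}}{\rho^{2n}}\Bigr],
\]
which is a radially decreasing function of $\rho = |x|$. Combined with the measure constraint $|\Omega_r|=|A_{r,R}|$, a mass-moving argument between $\Omega_r\setminus A_{r,R} \subset \{|x|>R\}$ and $A_{r,R}\setminus\Omega_r \subset \{|x|<R\}$ gives $N\le N_{\mathrm{shell}}$, while the parallel bound $D\ge f(R)^2 P(B_R) =: D_{\mathrm{shell}}$ is obtained from the radial monotonicity of $f^2$ together with the classical isoperimetric inequality. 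These two estimates combine to $N/D \le N_{\mathrm{shell}}/D_{\mathrm{shell}} = \sigma_2(A_{r,R})$. An analogous argument using the radial first shell eigenfunction as a test function gives $\sigma_1(\Omega_r)\le \sigma_2(A_{r,R})$, and the min-max principle applied to $V$ delivers $\sigma_{n+1}(\Omega_r)\le \sigma_2(A_{r,R})$.

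The main obstacle is the boundary inequality $D\ge D_{\mathrm{shell}}$: because $f(|x|)^2$ varies non-trivially across $\partial\Omega_0$, the classical isoperimetric inequality $P(\Omega_0)\ge P(B_R)$ alone does not suffice, and one must either use a radial rearrangement of $\partial\Omega_0$ compatible with the order-$4$ symmetry in each coordinate plane, or invoke a sharper weighted isoperimetric inequality tailored to the moments appearing in the polynomial expansion of $f(|x|)^2$. The algebraic simplification of $\sum_i|\nabla\psi_i|^2$ into a manifestly radial and decreasing function is the key fact that makes the bulk comparison $N\le N_{\mathrm{shell}}$ essentially automatic and reduces the entire proof to this single boundary estimate.
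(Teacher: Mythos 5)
First, a contextual point: the paper you were given does not prove this theorem at all — it is quoted as a known result from \cite{BCV2023} (Basak--Chorwadwala--Verma), so your proposal can only be measured against that external proof, whose strategy it does in fact mirror: the transplanted test functions $\psi_i(x)=f(|x|)\,x_i/|x|$ with $f(\rho)=\rho-r^n\rho^{1-n}$, the use of the order-$4$ symmetry to force the $L^2(\partial\Omega_0)$- and Dirichlet-orthogonality relations (this is exactly where the symmetry hypothesis enters, and your justification that $u_1$ inherits the symmetry by simplicity and positivity is the right one), the simultaneous diagonalization of both quadratic forms on $V=\mathrm{span}\{u_1,\psi_1,\dots,\psi_n\}$, and the Hardy--Littlewood comparison $N\le N_{\mathrm{shell}}$ based on the identity $(f')^2+(n-1)f^2/\rho^2=n\bigl[1+(n-1)r^{2n}/\rho^{2n}\bigr]$, which you state correctly, are all sound.

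The genuine gap is the one you name yourself and then leave open: the boundary estimate $D=\int_{\partial\Omega_0}f(|x|)^2\,d\mathcal{H}^{n-1}\ge f(R)^2P(B_R)$. In the body of your argument you assert it follows from ``radial monotonicity of $f^2$ together with the classical isoperimetric inequality'', which is false as stated — portions of $\partial\Omega_0$ may lie inside $B_R$, where $f^2<f(R)^2$, so $P(\Omega_0)\ge P(B_R)$ gives nothing — and your closing paragraph concedes this is an unresolved ``main obstacle''. A proof whose central inequality is flagged as missing is incomplete. The step can actually be closed, with no symmetry hypothesis, by the Betta--Brock--Mercaldo--Posteraro divergence trick used in Brock's proof of \eqref{eq:brock}: since $f(r)=0$ kills the contribution of $\partial B_r$ and $\langle x,\nu\rangle\le|x|$,
\[
\int_{\partial\Omega_0}f(|x|)^2\,d\mathcal{H}^{n-1}\ \ge\ \int_{\partial\Omega_r}f(|x|)^2\,\frac{\langle x,\nu\rangle}{|x|}\,d\mathcal{H}^{n-1}\ =\ \int_{\Omega_r}g(|x|)\,dx,
\qquad g(\rho):=(f^2)'(\rho)+(n-1)\frac{f(\rho)^2}{\rho},
\]
and a direct computation gives $g(\rho)=(n+1)\rho-2r^n\rho^{1-n}-(n-1)r^{2n}\rho^{1-2n}$, whose derivative $(n+1)+2(n-1)r^n\rho^{-n}+(n-1)(2n-1)r^{2n}\rho^{-2n}$ is positive, so $g$ is increasing on $[r,\infty)$. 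Hence the same mass-transfer argument you used for $N$ — now with an \emph{increasing} integrand, so the inequality reverses — yields $\int_{\Omega_r}g(|x|)\,dx\ge\int_{A_{r,R}}g(|x|)\,dx=f(R)^2P(B_R)$, the last equality again by the divergence theorem on the shell. Inserting this lemma makes your chain $N/D\le N_{\mathrm{shell}}/D_{\mathrm{shell}}=\sigma_2(A_{r,R})$ complete; without it, the proof does not stand.
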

Furthermore, the authors provided a counterexample for which inequality \eqref{eq:isopVerma} may not hold if the symmetry condition is dropped (see  \cite[Remark 5.1]{BCV2023}). In particular they proved, in the planar case, that the second Steklov-Dirichlet eigenvalue of a non-centrally symmetric perforated ellipse is strictly greater than the one of the annulus having its same measure.\\

Nevertheless, this counterexample does not take into count the fact that the hole of the set is ``\textit{too big}". Indeed the second main goal of this paper is to prove inequality \eqref{eq:isopVerma} for $k=2$, when the radius of the hole is small enough, and $\Omega_0$ is an open, bounded set with Lipschitz boundary with fixed measure, or when $\Omega_0$ is an open, bounded and convex set, with fixed perimeter.\\

To this aim, we need to study the connection between problems \eqref{proSD} and \eqref{proS}, which is suggested by the following simple considerations. With Proposition \ref{prop:firsteig}, we will see that the convergence of the Steklov-Dirichlet eigenvalue  and corresponding eigenfunctions to the ones of the trivial Steklov eigenvalue in the non-perforated domain. In addition, a direct computation in the radial case shows that the second Steklov-Dirichlet eigenvalue tends to the first non-trivial Steklov eigenvalue, when the radius of the hole vanishes, and the same happens for the corresponding eigenfunctions in a pointwise convergence (see proposition \ref{prop:convergenceeigrad}). 

Hence, if we denote by $\{\sigma_k\}_{k\in \mathbb N}$ and by $\{\bar{\sigma}_k\}_{k\in \mathbb N_0}$ respectively the spectra of the problem \eqref{proSD} and \eqref{proS}, one might ask if a similar behaviour is observed for $\sigma_2(\Omega)$ and $\overline{\sigma}_1(\Omega_0)$ and the corresponding eigenfunctions.
Our second main Theorem answers positively to these questions. 
\begin{thm}\label{thm:main}
Let $\Omega=\Omega_0\setminus\overline{B}_r$, where $\Omega_0$ is an open, bounded and connected set with Lipschitz boundary, and $B_r$ a ball of radius $r>0$ centered at the origin, such that $\overline{B}_r \subset \Omega_0$. Then we have
\begin{equation*}
    \lim_{r\to 0^+}\sigma_{2}(\Omega_r) = \bar{\sigma}_{1}(\Omega_0).
\end{equation*}
Moreover, there exists a sequence of eigenfunctions $\{u_2^r\}_r$ corresponding to $\sigma_2(\Omega_r)$ and an eigenfunction $\overline{u}$ corresponding to $\bar{\sigma}_{1}(\Omega_0)$, such that $u^2_r$ converges to $\overline{u}$ in $H^1(\Omega_0)$, as $r\to 0^+$.
\end{thm}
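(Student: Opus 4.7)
The plan is to prove the eigenvalue convergence $\sigma_2(\Omega_r)\to\overline{\sigma}_1(\Omega_0)$ by matching upper and lower bounds from the variational formula \eqref{eq:carvarSD}, and then to upgrade weak $H^1$ convergence of the eigenfunctions to strong convergence via a norm argument. Proposition \ref{prop:firsteig}, which supplies $\sigma_1(\Omega_r)\to 0$ and $u_1^r\to c_{\Omega_0}$ in $H^1(\Omega_0)$, plays a central role throughout. The main technical ingredient is a cutoff $\eta_r\in H^1(\Omega_0)$ vanishing on $\overline B_r$, identically equal to $1$ outside a small neighborhood of $\overline B_r$, and with $\int_{\Omega_0}|\nabla\eta_r|^2\,dx\to 0$ as $r\to 0^+$; this vanishing-capacity property is automatic for $n\ge 3$ via a power profile, while in $n=2$ it requires a logarithmic profile whose Dirichlet energy is of order $1/\log(1/r)$.

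\textbf{Upper bound.} Let $\overline u$ be an eigenfunction for $\overline{\sigma}_1(\Omega_0)$ in \eqref{proS}, normalized by $\int_{\partial\Omega_0}\overline u^2\,d\mathcal{H}^{n-1}=1$; orthogonality against the constant eigenfunction of $\overline{\sigma}_0=0$ gives $\int_{\partial\Omega_0}\overline u\,d\mathcal{H}^{n-1}=0$. Setting $\tilde v_r:=\eta_r\overline u\in H^1_{\partial B_r}(\Omega_r)$, an expansion of $|\nabla\tilde v_r|^2$ together with dominated convergence and Cauchy--Schwarz yields $\int_{\Omega_r}|\nabla\tilde v_r|^2\,dx\to\int_{\Omega_0}|\nabla\overline u|^2\,dx=\overline{\sigma}_1(\Omega_0)$, while $\tilde v_r|_{\partial\Omega_0}=\overline u$ for $r$ small enough. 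I then orthogonalize against $u_1^r$: putting $c_r:=\int_{\partial\Omega_0}\tilde v_r\,u_1^r\,d\mathcal{H}^{n-1}$ and $w_r:=\tilde v_r-c_r u_1^r\in H^1_{\partial B_r}(\Omega_r)$, Proposition \ref{prop:firsteig} gives $c_r\to c_{\Omega_0}\int_{\partial\Omega_0}\overline u=0$. Since $\int|\nabla u_1^r|^2=\sigma_1(\Omega_r)\to 0$, Cauchy--Schwarz makes all cross terms in the Rayleigh quotient of $w_r$ negligible. The standard characterization
\[
\sigma_2(\Omega_r)=\inf\Big\{\tfrac{\int_{\Omega_r}|\nabla w|^2}{\int_{\partial\Omega_0}w^2}\;:\;w\in H^1_{\partial B_r}(\Omega_r),\;\int_{\partial\Omega_0}wu_1^r=0\Big\}
\]
then yields $\limsup_{r\to 0^+}\sigma_2(\Omega_r)\le\overline{\sigma}_1(\Omega_0)$.

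\textbf{Lower bound and strong $H^1$ convergence.} Let $u_2^r$ be an eigenfunction for $\sigma_2(\Omega_r)$ with $\int_{\partial\Omega_0}(u_2^r)^2=1$ and $\int_{\partial\Omega_0}u_2^r u_1^r=0$, orthogonality following from Green's identity on \eqref{proSD}. Extending $u_2^r$ by zero on $B_r$ produces a function in $H^1(\Omega_0)$ with $\int_{\Omega_0}|\nabla u_2^r|^2\,dx=\sigma_2(\Omega_r)$, uniformly bounded by the previous step. Rellich and compactness of the trace operator give, along a subsequence, $u_2^r\rightharpoonup u^*$ in $H^1(\Omega_0)$, $u_2^r\to u^*$ strongly in $L^2(\Omega_0)$ and in $L^2(\partial\Omega_0)$, and $\sigma_2(\Omega_r)\to\sigma^*$, with $\int_{\partial\Omega_0}(u^*)^2=1$ and $\int_{\partial\Omega_0}u^*=0$ (passing to the limit in the orthogonality $u_2^r\perp u_1^r\to c_{\Omega_0}$). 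To identify $u^*$, for $\phi\in C^\infty(\overline{\Omega_0})$ I plug $\phi_r:=\eta_r\phi\in H^1_{\partial B_r}(\Omega_r)$ into the weak formulation of \eqref{proSD}: the term $\int\phi\,\nabla u_2^r\cdot\nabla\eta_r$ vanishes by Cauchy--Schwarz and $\int|\nabla\eta_r|^2\to 0$, the term $\int\eta_r\nabla u_2^r\cdot\nabla\phi$ converges to $\int_{\Omega_0}\nabla u^*\cdot\nabla\phi$ (weak-$L^2$ gradients against strong-$L^2$ test), and the boundary integral converges by strong $L^2(\partial\Omega_0)$ convergence of traces. Density of $C^\infty(\overline{\Omega_0})$ in $H^1(\Omega_0)$ yields $\int_{\Omega_0}\nabla u^*\cdot\nabla\phi\,dx=\sigma^*\int_{\partial\Omega_0}u^*\phi\,d\mathcal{H}^{n-1}$ for every $\phi\in H^1(\Omega_0)$, so $u^*$ solves \eqref{proS} with eigenvalue $\sigma^*$; being $L^2(\partial\Omega_0)$-orthogonal to constants, $\sigma^*\ge\overline{\sigma}_1(\Omega_0)$, and with the upper bound $\sigma^*=\overline{\sigma}_1(\Omega_0)$ and $u^*$ is an associated eigenfunction. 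Finally, $\|\nabla u_2^r\|^2_{L^2(\Omega_0)}=\sigma_2(\Omega_r)\to\overline{\sigma}_1(\Omega_0)=\|\nabla u^*\|^2_{L^2(\Omega_0)}$ combined with weak $H^1$ convergence promotes the convergence to strong $H^1(\Omega_0)$; the hardest step is the delicate balance in the cutoff construction and the careful book-keeping in the passage to the limit in the weak formulation.
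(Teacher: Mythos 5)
Your proof is correct, and its overall architecture coincides with the paper's: a $\limsup$ bound obtained from a test function built out of $\overline u$ and a small-capacity cutoff, made orthogonal to $u_1^r$; compactness (Friedrichs plus trace compactness) and identification of the subsequential limit through the weak formulation tested with cutoff-multiplied functions; and strong $H^1$ convergence from convergence of the Dirichlet energies. Where you genuinely deviate is the upper bound. The paper enforces the constraint $\int_{\partial\Omega_0}\varphi u_1^{r_\varepsilon}\,d\mathcal{H}^{n-1}=0$ multiplicatively, taking $\varphi=\omega^\varepsilon\,\overline u_1/u^{\eta}$ with $u^{\eta}=\max\{\eta,u_1^{r_\varepsilon}\}$, so that $\varphi u_1^{r_\varepsilon}=\overline u_1$ on $\partial\Omega_0$; this introduces the truncation parameter $\eta$, estimates with constants of order $\eta^{-2}$ and $\eta^{-4}$, and a dominated-convergence passage for the ratio, and it implicitly uses a global bound on $\overline u_1$ over $\Omega_0$. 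You instead enforce orthogonality linearly (Gram--Schmidt): $w_r=\eta_r\overline u-c_ru_1^r$ with $c_r=\int_{\partial\Omega_0}\eta_r\overline u\,u_1^r\,d\mathcal{H}^{n-1}$; the constraint is then exact thanks to the normalization $\|u_1^r\|_{L^2(\partial\Omega_0)}=1$, Proposition \ref{prop:firsteig} gives $c_r\to c_{\Omega_0}\int_{\partial\Omega_0}\overline u\,d\mathcal{H}^{n-1}=0$, and all correction terms are $o(1)$ because $\|\nabla u_1^r\|^2_{L^2}=\sigma_1(\Omega_r)\to 0$. This is simpler, avoids the truncation entirely, and only needs $\overline u$ bounded near the origin (automatic by harmonicity, since $\nabla\eta_r$ is supported in a fixed compact neighbourhood of the origin). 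A second, minor difference: your cutoff keeps the outer radius fixed, so the vanishing of its energy ($\sim r^{n-2}$ for $n\ge 3$, $\sim 1/\log(1/r)$ for $n=2$) is automatic, whereas the paper's corrector $\omega^\varepsilon$ shrinks both radii and hence needs the rate condition \eqref{eq:r=o(eps)}; your variant dispenses with that hypothesis. The remaining steps match the paper's Steps 3--4 essentially verbatim (identification of the limit as a Steklov eigenfunction orthogonal to constants, hence $\sigma^*\ge\overline\sigma_1(\Omega_0)$, then the weak-plus-norm-convergence upgrade); just make explicit, as the paper does with its $\liminf$/$\limsup$ bookkeeping, that every subsequence admits a further subsequence along which $\sigma_2(\Omega_r)\to\overline\sigma_1(\Omega_0)$, so that the full limit indeed exists.
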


The proof relies on a suitable choice of test functions, with particular attention to the rate of convergence of the radius of the inner ball.\\

\noindent As a consequence of Theorem,
we obtain two isoperimetric inequalities. The first is proved using the Brock inequality.
\begin{cor}\label{thm:isoperimetric1}
   Let $\Omega_r=\Omega_0\setminus\overline{B}_r$, where $\Omega_0$ is an open, bounded and connected set in $\mathbb R^n$ with Lipschitz boundary and $B_r$ a ball of radius $r>0$ centered at the origin, such that $\overline{B}_r \subset \Omega_0$. Then, there exists a $r^M_2=r^M_2(\Omega_0)$ that only depends on the geometry of $\Omega_0$, such that for every $r\in (0,r^M_2)$, we get
   \begin{equation}\label{eq:isopsecondSD}
       \sigma_2(\Omega_r) \le \sigma_2(A_{r,R_M}),
   \end{equation}
   where $R_M>0$ is the radius of the ball with the same measure as $\Omega_0$.
\end{cor}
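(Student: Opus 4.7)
The plan is to combine Theorem \ref{thm:main} with Brock's isoperimetric inequality for the first non-trivial Steklov eigenvalue, exploiting the fact that both $\sigma_2(\Omega_r)$ and $\sigma_2(A_{r,R_M})$ converge, as $r\to 0^+$, to the corresponding first non-trivial Steklov eigenvalues of $\Omega_0$ and $B_{R_M}$ respectively.

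First I would dispose of the trivial case: if $\Omega_0 = B_{R_M}$, then $\Omega_r = A_{r,R_M}$ and \eqref{eq:isopsecondSD} holds with equality for every admissible $r$. So assume $\Omega_0 \neq B_{R_M}$. Applying Theorem \ref{thm:main} to the set $\Omega_0$ we obtain
\begin{equation*}
\lim_{r\to 0^+}\sigma_2(\Omega_r) = \bar{\sigma}_1(\Omega_0),
\end{equation*}
and applying the same theorem (or the direct radial computation of proposition \ref{prop:convergenceeigrad}) to the ball $B_{R_M}$ we get
\begin{equation*}
\lim_{r\to 0^+}\sigma_2(A_{r,R_M}) = \bar{\sigma}_1(B_{R_M}).
\end{equation*}

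Next, I would invoke Brock's inequality (see \cite{Bro2001}): among all bounded open sets with Lipschitz boundary and fixed measure $|\Omega_0| = |B_{R_M}|$, the ball uniquely maximizes $\bar{\sigma}_1$. In particular, the strict inequality
\begin{equation*}
\bar{\sigma}_1(\Omega_0) < \bar{\sigma}_1(B_{R_M})
\end{equation*}
holds. Set $\delta := \bar{\sigma}_1(B_{R_M}) - \bar{\sigma}_1(\Omega_0) > 0$. By the two convergences above, there exists $r^M_2 = r^M_2(\Omega_0) > 0$ such that, for every $r \in (0, r^M_2)$,
\begin{equation*}
\sigma_2(\Omega_r) < \bar{\sigma}_1(\Omega_0) + \tfrac{\delta}{2} = \bar{\sigma}_1(B_{R_M}) - \tfrac{\delta}{2} < \sigma_2(A_{r,R_M}),
\end{equation*}
which is \eqref{eq:isopsecondSD}.

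The argument is essentially a soft combination of the two ingredients, so there is no serious obstacle beyond checking that Brock's result applies in the stated class of sets (which it does, since $\Omega_0$ is open, bounded, connected with Lipschitz boundary) and that the threshold $r^M_2$ genuinely depends only on $\Omega_0$ — this follows because both limits, and hence the quantitative $\delta$ above, depend only on $\Omega_0$ (and $R_M$, which is itself a function of $|\Omega_0|$). The delicate input is Theorem \ref{thm:main} itself; once that convergence is in hand the corollary is immediate.
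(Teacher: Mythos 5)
Your proof is correct and follows essentially the same route as the paper's: Theorem \ref{thm:main} combined with the strictness of Brock's inequality for $\Omega_0\neq B_{R_M}$, with the annulus side handled via Proposition \ref{prop:convergenceeigrad}. The only cosmetic difference is that the paper invokes the bound $\overline{\sigma}_1(B_{R_M})\le\sigma_2(A_{r,R_M})$, valid for every $r\in(0,R_M)$ by monotonicity, whereas you use the convergence $\sigma_2(A_{r,R_M})\to\overline{\sigma}_1(B_{R_M})$ as $r\to 0^+$, which simply requires taking $r$ small on both sides of the comparison.
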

The second one is proved by using the Weinstock inequality instead.
\begin{cor}\label{thm:isoperimetric2}
   Let $\Omega_r=\Omega_0\setminus\overline{B}_r$, where $\Omega_0$ is an open, bounded and convex set and $B_r$ a ball of radius $r>0$ centered at the origin, such that $\overline{B}_r \subset \Omega_0$. There exists a $r^P_2=r^P_2(\Omega_0)$ that only depends on the geometry of $\Omega_0$, such that for every $r\in (0,r^P_2)$, we get
   \begin{equation}\label{eq:isopsecondSDP}
       \sigma_2(\Omega_r) \le \sigma_2(A_{r,R_P}),
   \end{equation}
   where $R_P>0$ is the radius of the ball with the same perimeter as $\Omega_0$.
\end{cor}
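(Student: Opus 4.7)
The plan is to combine Theorem \ref{thm:main} with the Weinstock inequality for convex sets. For the reference spherical shell $A_{r,R_P}$, the explicit radial computation recalled in Proposition \ref{prop:convergenceeigrad} gives
\[
\lim_{r\to 0^+}\sigma_2(A_{r,R_P}) = \bar{\sigma}_1(B_{R_P}),
\]
while Theorem \ref{thm:main} applied to $\Omega_0$ itself yields $\sigma_2(\Omega_r) \to \bar{\sigma}_1(\Omega_0)$ as $r \to 0^+$. Since $\Omega_0$ is convex with $P(\Omega_0) = P(B_{R_P})$, the Weinstock inequality in the class of convex bodies in $\mathbb{R}^n$ proved in \cite{BFNT2021} delivers
\[
\bar{\sigma}_1(\Omega_0) \le \bar{\sigma}_1(B_{R_P}),
\]
with equality if and only if $\Omega_0 = B_{R_P}$.

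If $\Omega_0 = B_{R_P}$, then $\Omega_r = A_{r,R_P}$ and \eqref{eq:isopsecondSDP} holds as an equality for every admissible $r$. Otherwise, I set $\eta := \bar{\sigma}_1(B_{R_P}) - \bar{\sigma}_1(\Omega_0) > 0$; the two convergences above then provide $r^P_2 = r^P_2(\Omega_0) > 0$ such that
\[
|\sigma_2(\Omega_r) - \bar{\sigma}_1(\Omega_0)| < \frac{\eta}{3}, \qquad |\sigma_2(A_{r,R_P}) - \bar{\sigma}_1(B_{R_P})| < \frac{\eta}{3}
\]
for every $r \in (0, r^P_2)$, which immediately yields $\sigma_2(\Omega_r) < \sigma_2(A_{r,R_P})$.

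The main obstacle is not in the limiting argument itself (which is a direct consequence of Theorem \ref{thm:main}), but in identifying the correct inequality on the non-perforated domain: the perimeter-constraint Weinstock inequality in general dimension is only available in the class of convex sets, which is precisely why this corollary restricts to convex $\Omega_0$, in contrast to Corollary \ref{thm:isoperimetric1} where Brock's measure-constraint inequality handles arbitrary Lipschitz sets. The threshold $r^P_2$ depends only on the geometry of $\Omega_0$, because both the spectral gap $\eta$ and the rates of convergence in the two limits above depend only on $\Omega_0$.
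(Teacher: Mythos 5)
Your proposal is correct and takes essentially the same approach as the paper: Theorem \ref{thm:main} combined with the Weinstock inequality for convex sets, with the shell handled through Proposition \ref{prop:convergenceeigrad}. The only minor difference is that the paper invokes the exact inequality $\overline{\sigma}_1(B_{R_P}) \le \sigma_2(A_{r,R_P})$, valid for \emph{every} $r\in(0,R_P)$ by the monotonicity statement in Proposition \ref{prop:convergenceeigrad}, rather than your second limit with the $\eta/3$ tolerance, so only one limiting process (that of $\sigma_2(\Omega_r)$) is actually needed.
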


\begin{rem}
We stress that, in dimension two, Corollary \ref{thm:isoperimetric2} holds in the class of planar simply connected domains.
\end{rem}

\noindent \textbf{Plan of the paper}: In Section \ref{sec:2} we write down some notation and some basic and new preliminary result:  we analyze the radial case;  we study the number of nodal domains of the eigenfunctions corresponding to the $k$-th Steklov-Dirichlet eigenvalue, proving in particular that for $k=2$, the nodal domains are exactly $2$; in Section \ref{sec:3} we prove the main results: firstly Theorem \ref{cor:isoperimetricsmallradius1} regarding the first Steklov-Dirichlet eigenvalue; then we give the proof of Theorem \ref{thm:main} and Corollaries \ref{thm:isoperimetric1}-\ref{thm:isoperimetric2}, about the second Steklov-Dirichlet eigenvalue.

\section{Preliminaries}\label{sec:2}
In this section we will give some notation and definition. Moreover we will prove some classical result adapted to the perforated case (such as Corollary \ref{cor:nodaldomains2}) and some new result, as the ones in \ref{prop:convergenceeigrad}-\ref{prop:ballnotnodal}. \subsection{Notations}
Throughout this paper, we denote by $B_r(x_0)$ and $B_r$ the balls in $\mathbb{R}^n$ of radius $r>0$ centered at  $x_0\in\mathbb{R}^n$ and at the origin, respectively. Moreover with $B$, $\mathbb{S}^{n-1}$ and $\omega_n$ we will denote respectively the unit ball of $\mathbb R^{n}$, its boundary and its volume. Let $r,R$ be such that $0<r<R$, the spherical shell will be denoted as follows:
\begin{equation*}
A_{r,R}=\{x \in \mathbb R^{n} \colon\, r<|x|<R\}.
\end{equation*}
The Lebesgue measure of a measurable set $E \subset \mathbb R^{n}$ will be denoted by $|E|$. Moreover, the $(n-1)$-dimensional Hausdorff measure in $\mathbb{R}^n$ will be denoted by $\mathcal H^{n-1}$ and the Euclidean scalar product in $\mathbb{R}^n$ is denoted by $\langle\cdot,\cdot\rangle$. Since we will always work with sets that have at least Lipschitz boundary, then we will denote by $P(E)=\mathcal{H}^{n-1}(\partial E)$ the perimeter of $E$.

\subsection{Friedrich inequalities}
Let $\Omega$ be an open bounded subset of $\mathbb{R}^n$ with Lipschitz boundary. Besides the classical Sobolev trace inequality (see for instance \cite{evans2010partial}) that gives us the compactness of the embedding operator of $H^1(\Omega)$ into $L^2(\partial \Omega)$, we want to introduce another important embedding Theorem, which is a consequence of the so-called  Friedrich's inequality (see for instance \cite{friedrichs1928randwert, maz2013sobolev} and for a more general case \cite{cianchi2016sobolev}). 
Let $H^1(\Omega,\partial \Omega)$ the completion of the set of functions in $C^{\infty}(\Omega)\cap C(\Bar{\Omega})$ which have weak gradient in $L^2(\Omega)$, equipped with the following norm (see \cite{maz2013sobolev} for the details)
\begin{equation*}
\|u\|_{H^1(\Omega,\partial \Omega)}=\|\nabla u\|_{L^2(\Omega)}+\|u\|_{L^2(\partial\Omega)}.
\end{equation*}
Friedrich's inequality states that
\begin{equation}\label{friedin}
    \|u\|_{L^2(\Omega)}\le C(\|\nabla u\|_{L^2(\Omega)}+\|u\|_{L^2(\partial\Omega)})
\end{equation}
for some positive constant $C>0$. Also in this case the embedding operator of $H^1(\Omega,\partial \Omega)$ into $L^2(\Omega)$ is compact (see Corollary 3, p. 392 in \cite{maz2013sobolev}).

\subsection{On the Steklov eigenvalue problem}
Let $\Omega_0 \subset 
 \mathbb R^n$ be a simply connected open, bounded set with Lipschitz boundary and let us consider problem \eqref{proS}. We will say that a real number $\overline{\sigma}$ and a $H^1(\Omega_0)$ function $v$ are called eigenvalue and its corresponding eigenfunction to problem \eqref{proS} if and only if
\begin{equation}\label{eq:WeakSteklov}
\int_{\Omega_0}\nabla \varphi \cdot \nabla v \,dx = \overline{\sigma}\int_{\partial \Omega_0}\varphi v \,d\mathcal{H}^{n-1}, \qquad \forall \varphi \in H^1(\Omega_0).
\end{equation}
It is well known that the Steklov spectrum is formed by a sequence of non-negative diverging real numbers, where the first eigenvalue $\overline{\sigma}_0$ is trivial and the corresponding eigefunctions are the costants. In particular the first non-trivial Steklov eigenvalue have the following variational characterization 
\begin{equation*}
    \overline{\sigma}_1(\Omega_0)=  \inf_{0\neq v \in H^1(\Omega_0)}\bigg\{\frac{\int_{\Omega_0}\abs{\nabla v}^2\,dx}{\int_{\partial \Omega_0}v^2\,d\mathcal{H}^{n-1}} : \int_{\partial \Omega_0}v\,d\mathcal{H}^{n-1}=0 \bigg\}.
\end{equation*}
As already mentioned in the introduction, $\overline{\sigma}_1$ satisfies some isoperimetric inequality. To be more precise F. Brock (see \cite{Bro2001}) proved the following scaling invariant inequality
\begin{equation}\label{eq:brock}
    \overline{\sigma}_1(\Omega_0)\abs{\Omega_0}^\frac{1}{n}\le \overline{\sigma}_1(B_R)\abs{B_R}^\frac{1}{n},
\end{equation} 
in the class of open, bounded sets with Lipschitz boundary, and R. Weinstock (see \cite{Wein1954}) proved, in dimension $2$ and in the class of simply connected sets with Lipschitz boundary, the following isoperimetric inequality
\begin{equation}\label{eq:weinstock}
    \overline{\sigma}_1(\Omega_0)P(\Omega_0)^\frac{1}{n-1}\le \overline{\sigma}_1(B_R)P(B_R)^\frac{1}{n-1},
\end{equation} 
successively generalized in every dimension, in the class of open, bounded and convex sets in \cite{BFNT2021}.

\subsection{On the Steklov-Dirichlet eigenvalue problem}\label{subsec:SD} Let $\Omega_0$ be an open, bounded and connected set with Lipschitz boundary and let us consider a ball $B_r$ of radius $r>0$ centered at the origin, such that $\overline{B}_r \subset \Omega_0$. Let us denote by
\begin{equation*}
C^\infty_{\partial B_{R_1}} (\Omega):=\{ u_{|\Omega}  \ | \ u \in C_0^\infty (\R^n),\ \spt (u)\cap \partial B_{R_1}=\emptyset \}.  
\end{equation*}
Then we define the space $H^1_{\partial B_{R_1}}(\Omega)$ as follows
\begin{equation*}
    H^1_{\partial B_{R_1}}(\Omega):= \overline{C^\infty_{\partial B_{R_1}} (\Omega)}^{H^1(\Omega)}.
\end{equation*}
Let us now consider problem \eqref{proSD}. We will say that a real number $\sigma$ and a $H^1_{\partial B_r}(\Omega)$ function $v$ are an eigenvalue and its corresponding eigenfunction to problem \eqref{proS} if and only if
\begin{equation}\label{eq:WeakSteklovdirichlet}
\int_{\Omega_0}\nabla \varphi \cdot \nabla v \,dx = \sigma\int_{\partial \Omega_0}\varphi v \,d\mathcal{H}^{n-1}, \qquad \forall \varphi \in H^1_{\partial B_r}(\Omega).
\end{equation}

In \cite{PaPiSa}, it is proved that the first Steklov-Dirichlet eigenvalue, which has the following variational characterization
\begin{equation*}
    \sigma_1(\Omega) = \inf_{0\neq v \in H^1_{\partial B_r}(\Omega)}\frac{\int_{\Omega}\abs{\nabla v}^2\,dx}{\int_{\partial \Omega_0}v^2\,d\mathcal{H}^{n-1}},
\end{equation*}is positive, simple and consequently, the corresponding eigenfunctions cannot change sign in $\Omega$. Since the Neumann-to-Dirichlet operator is compact and self-adjoint in $H^{-\frac12}(\partial \Omega_0)$ (see \cite{Agr2006}), then we can compute the eigenvalues 
as follows (see for instance \cite[Chapter $1$]{weinstein1972methods}. Let use denote by $W_j=\{v\in H^1_{\partial B_r}(\Omega)\; : \; \int_{\partial\Omega_0}vu_i\,d\mathcal{H}^{n-1}=0,\; i=1,...,j-1\}$, where $u_i$ are eigenfunctions corresponding to $\sigma_i(\Omega)$.  Then for $k\ge 2$
\begin{equation*}
    \sigma_k(\Omega) =\inf_{0\neq v \in W_k}\frac{\int_{\Omega}\abs{\nabla v}^2\,dx}{\int_{\partial \Omega_0}v^2\,d\mathcal{H}^{n-1}}.
\end{equation*}

In particular we can write the $2$nd Steklov-Dirichlet eigenvalue as follows. Let $u_1$ be an eigenfunction corresponding to $\sigma_1(\Omega)$. Then
\begin{equation}\label{eq:varcars2}
    \sigma_2(\Omega) = \inf_{0\neq v \in H^1_{\partial B_r}(\Omega)}\bigg\{\frac{\int_{\Omega}\abs{\nabla v}^2\,dx}{\int_{\partial \Omega_0}v^2\,d\mathcal{H}^{n-1}} : \int_{\partial \Omega_0}u_1v\,d\mathcal{H}^{n-1}=0\bigg\}.
\end{equation} 

\begin{rem}
    We stress that when we fix the measure or the perimeter of $\Omega_0$ and the radius $r>0$ of the inner ball $B_r$ is fixed and small enough, then $\sigma_2(\Omega)$ is always bounded above in the class of open, bounded and connected sets with Lipschitz boundary (see Step $2$ in the proof of \ref{thm:main}). In particular the $\inf$ in \eqref{eq:varcars2} is actually a minimum and the proof of the existence follows the same lines of \cite[Proposition $2.2$]{paoli2020stability}, with just the additional carefulness of the orthogonality condition that must be satisfied by the minimizing sequence. But this is not a real issue, since a minimizing sequence will converge strongly in $L^2(\partial \Omega_0)$ to minimum.
\end{rem}

\subsubsection{\textbf{The radial case}}\label{sec:2radial}
Let $\Omega_0= B_R$ be a ball centered at the origin with radius $R>r>0$, and let us consider problem \eqref{proSD} in $\Omega=A_{r,R}$.
In this case, we know that the first Steklov-Dirichlet eigenvalue and its corresponding eigenfunctions (which are radial) can be computed and are given by
\begin{equation}\label{eq:radeigenvf}
\sigma_1(A_{r,R})=
\begin{cases}
\frac{1}{R\log\left(\frac{R}{r}\right)}& {\rm for}\;\; n=2\vspace{0.1cm}\\
\frac{n-2}{R\left[\left(\frac{R}{r}\right)^{n-2}-1\right]}& {\rm for}\;\; n\geq 3\vspace{0.1cm},\\ 
\end{cases}\qquad 
w(s)=\begin{cases}
\ln s-\ln r
& {\rm for}\;\; n=2\vspace{0.1cm}\\
\left( \dfrac{1}{r^{n-2}}-\dfrac{1}{s^{n-2}}\right)& {\rm for}\;\; n\geq 3\vspace{0.1cm},
\end{cases}
\end{equation}
with $s=|x|$. Regarding the explicit expression of the eigenvalues $\sigma_k(A_{r,R})$, for $k\ge 2$, and their corresponding eigenfunctions we refer to \cite{BCV2023}. In the following proposition we will write them only in case $k=2$. 

\begin{prop}
Let $\Omega=A_{r,R}$. The second Steklov-Dirichlet eigenvalue of problem \eqref{proSD} has multiplicity $n$ and it is given by
\begin{equation} \label{secondeigrad}
    \sigma_2(A_{r,R}) = \frac{R^n +r^n (n-1)}{R[R^n-r^n]}.
\end{equation}
Moreover the corresponding eigenspace is spanned by the following eigenfunctions (in polar coordinates) 
\begin{equation}\label{eq:radeig}
    w_j(s,\theta) = \bigg(s-\frac{r^n}{s^{n-1}}\bigg)Y^1_j(\theta), \qquad j=1,\dots, n,
\end{equation}
where $(s,\theta)\in [r,R] \times \mathbb{S}^{n-1}$ and $Y^1_j(\theta)$ are the eigenfunctions corresponding to the first eigenvalue of the Laplace Beltrami operator on $\mathbb S^{n-1}$.
\end{prop}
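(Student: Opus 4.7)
My plan is to carry out a separation of variables in polar coordinates. Writing a harmonic function on $A_{r,R}$ as $u(s,\theta)=f(s)Y(\theta)$ with $Y$ a spherical harmonic of degree $k\ge 0$ on $\mathbb{S}^{n-1}$ (so $-\Delta_{\mathbb{S}^{n-1}}Y = k(k+n-2)Y$), the condition $\Delta u = 0$ reduces to the Euler ODE
\begin{equation*}
f''(s)+\frac{n-1}{s}f'(s)-\frac{k(k+n-2)}{s^{2}}f(s)=0.
\end{equation*}
For $k\ge 1$ the general solution is $f(s)=As^{k}+Bs^{-(k+n-2)}$, and the Dirichlet condition $f(r)=0$ forces $B=-A r^{2k+n-2}$, leaving (up to a multiplicative constant) the radial profile $f_{k}(s)=s^{k}-r^{2k+n-2}s^{-(k+n-2)}$. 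The case $k=0$ is handled separately (the solution involves either $s^{2-n}$ or $\log s$) and has already produced $\sigma_{1}(A_{r,R})$ in \eqref{eq:radeigenvf}.

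Imposing the Steklov condition $f_{k}'(R)=\sigma f_{k}(R)$ on the outer boundary and simplifying yields, for each $k\ge 1$, the single candidate eigenvalue
\begin{equation*}
\sigma(k)=\frac{k R^{2k+n-2}+(k+n-2)r^{2k+n-2}}{R\,(R^{2k+n-2}-r^{2k+n-2})},
\end{equation*}
which at $k=1$ collapses exactly to \eqref{secondeigrad}. Because the space of degree-one spherical harmonics on $\mathbb{S}^{n-1}$ is $n$-dimensional, with basis $\{Y^{1}_{j}\}_{j=1}^{n}$, the eigenspace associated with $\sigma(1)$ has dimension at least $n$ and is spanned precisely by the functions \eqref{eq:radeig}.

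To conclude that $\sigma(1)$ is actually the second Steklov-Dirichlet eigenvalue with multiplicity exactly $n$, I would verify that the map $k\mapsto\sigma(k)$ is strictly increasing on $\{0,1,2,\ldots\}$. Setting $\tau=r/R\in(0,1)$ and $m_{k}=2k+n-2$, a direct manipulation gives the convenient identity
\begin{equation*}
R\,\sigma(k)=k+\frac{m_{k}\,\tau^{m_{k}}}{1-\tau^{m_{k}}},
\end{equation*}
so the increment $R[\sigma(k{+}1)-\sigma(k)]$ becomes $1-[h(m_{k})-h(m_{k+1})]$ with $h(m)=m\tau^{m}/(1-\tau^{m})$, reducing the monotonicity to a scalar inequality on a single one-parameter family. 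Completeness of the spherical harmonics in $L^{2}(\mathbb{S}^{n-1})$ then forces every eigenfunction of \eqref{proSD} on $A_{r,R}$ to decompose into separable pieces of the form $f_{k}(s)Y(\theta)$; hence every eigenvalue of the problem coincides with some $\sigma(k)$, and ordering them delivers $\sigma_{2}(A_{r,R})=\sigma(1)$ with the claimed eigenspace. The only delicate point is the monotonicity estimate $h(m_{k})-h(m_{k+1})<1$, which becomes tight as $\tau\to 1^{-}$; elsewhere it is immediate, and the overall argument is a specialization of the general Steklov-Dirichlet spectrum computation on annuli carried out in \cite{BCV2023}.
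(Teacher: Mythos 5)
Your proposal is correct, and it is in fact more complete than what the paper itself records. The paper's own computation (it ultimately defers to \cite{BCV2023}) runs the same separation of variables but \emph{only} for the first nonzero eigenvalue $\lambda_1=n-1$ of $-\Delta_{\mathbb{S}^{n-1}}$: it solves the radial ODE, imposes $f(r)=0$ and $f'(R)=\sigma f(R)$, and obtains \eqref{secondeigrad} as the vanishing of a $2\times 2$ determinant, together with the eigenfunctions \eqref{eq:radeig}. What that computation alone does not show is that this value is the \emph{second} eigenvalue and that its multiplicity is exactly $n$; those facts are outsourced to \cite{BCV2023}. Your route supplies precisely these missing points: you compute the whole candidate spectrum $\sigma(k)$, $k\ge 0$, use completeness of spherical harmonics to show every eigenvalue of \eqref{proSD} on $A_{r,R}$ equals some $\sigma(k)$, and reduce the ordering and the exact multiplicity to strict monotonicity of $k\mapsto\sigma(k)$. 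The only step you leave to the literature, the inequality $h(m_k)-h(m_{k+1})<1$, should not be deferred, since it closes elementarily. With $u=\tau^{m}\in(0,1)$ and $m\ln\tau=\ln u$ one gets
\begin{equation*}
h'(m)=\frac{u}{1-u}+\frac{u\ln u}{(1-u)^{2}}=\frac{u\left(1-u+\ln u\right)}{(1-u)^{2}},
\end{equation*}
and the bound $h'(m)>-\tfrac12$ for all $m>0$ is equivalent to $F(u):=1-u^{2}+2u\ln u>0$ on $(0,1)$, which follows from $F(1)=F'(1)=0$ and $F''(u)=2\left(\tfrac1u-1\right)>0$ there. Consequently
\begin{equation*}
h(m_k)-h(m_{k+1})=-\int_{m_k}^{m_k+2}h'(t)\,dt<2\cdot\tfrac12=1,
\end{equation*}
so $R\left[\sigma(k+1)-\sigma(k)\right]>0$ strictly for every $\tau\in(0,1)$ (the case $n=2$, $k=0$ is included by the continuous extension $h(0^{+})=1/\ln(1/\tau)$, which matches $R\sigma_1(A_{r,R})$ from \eqref{eq:radeigenvf}). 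Strict monotonicity makes the values $\sigma(k)$ pairwise distinct, hence the eigenspace of $\sigma(1)$ is exactly the span of \eqref{eq:radeig}, with multiplicity $n$, and $\sigma_2(A_{r,R})=\dots=\sigma_{n+1}(A_{r,R})=\sigma(1)$, as claimed.
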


 It is straightforward to check that in any dimension $\lim_{r\to 0^+}\sigma_1(A_{r,R}) = 0$, where the zero value can be interpreted as the first trivial Steklov eigenvalue in $B_R$. Moreover, it is easy to prove that the  second Steklov-Dirichlet eigenvalue tends to the first non-trivial Steklov eigenvalue when the radius of the hole goes to zero, and the same happens for the corresponding eigenfunctions.

\begin{prop}\label{prop:convergenceeigrad}
The second Steklov-Dirichlet eigenvalue in the spherical shell $A_{r,R}$ is strictly increasing with respect to $r\in [0,R)$. In particular, it tends to the first non-trivial Steklov eigenvalue in $B_R$ as $r\to 0^+$, i.e.
\begin{equation*}
    \lim_{r\to 0^+} \sigma_2(A_{r,R}) = \overline{\sigma}_1(B_R).
\end{equation*}
Moreover the corresponding eigenfunctions in \eqref{eq:radeig} converges pointwise to the eigeinfunctions corresponding to $\overline{\sigma}_1(B_R)$ as $r\to 0^+$.
\end{prop}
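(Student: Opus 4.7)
The proposition is essentially a direct consequence of the explicit formula \eqref{secondeigrad} together with the classical fact that $\overline{\sigma}_1(B_R) = 1/R$, with eigenspace spanned by the coordinate functions $x_j = s\,Y^1_j(\theta)$ (where $Y^1_j$ are the first nontrivial spherical harmonics of $\mathbb{S}^{n-1}$, corresponding to eigenvalue $n-1$ of $-\Delta_{\mathbb{S}^{n-1}}$). So my plan is to verify the three assertions one by one, starting from \eqref{secondeigrad} and \eqref{eq:radeig}.

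For the monotonicity in $r$, I would view $\sigma_2(A_{r,R})$ as a function of $r \in [0,R)$ and differentiate. Writing $g(r) = R^n + (n-1)r^n$ and $h(r) = R\,[R^n - r^n]$, a direct computation gives
\begin{equation*}
\frac{d}{dr}\sigma_2(A_{r,R}) = \frac{g'(r)h(r) - g(r)h'(r)}{h(r)^2} = \frac{n\,r^{n-1}\,R\,\bigl[(n-1)(R^n - r^n) + R^n + (n-1)r^n\bigr]}{h(r)^2} = \frac{n^2\,R^{n+1}\,r^{n-1}}{h(r)^2},
\end{equation*}
which is strictly positive for $r \in (0,R)$; hence $r \mapsto \sigma_2(A_{r,R})$ is strictly increasing on $[0,R)$.

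For the limit, I would simply evaluate at $r = 0$: the formula \eqref{secondeigrad} yields
\begin{equation*}
\lim_{r\to 0^+}\sigma_2(A_{r,R}) = \frac{R^n}{R\cdot R^n} = \frac{1}{R} = \overline{\sigma}_1(B_R),
\end{equation*}
where the last identity is the standard computation of the first nontrivial Steklov eigenvalue of the ball (it has multiplicity $n$, with eigenfunctions given by the restrictions of the coordinate functions $x_j$, $j=1,\dots,n$).

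For the pointwise convergence of the eigenfunctions, I would note that for every fixed $(s,\theta) \in (0,R] \times \mathbb{S}^{n-1}$ one has $r^n/s^{n-1} \to 0$ as $r \to 0^+$, so by \eqref{eq:radeig}
\begin{equation*}
w_j(s,\theta) = \Bigl(s - \frac{r^n}{s^{n-1}}\Bigr) Y^1_j(\theta) \;\longrightarrow\; s\,Y^1_j(\theta), \qquad j = 1,\dots,n,
\end{equation*}
and $s\,Y^1_j(\theta) = x_j$ is precisely an eigenfunction associated to $\overline{\sigma}_1(B_R)$. There is essentially no obstacle here: the whole statement is a bookkeeping exercise on the explicit formulas already recorded in \eqref{secondeigrad} and \eqref{eq:radeig}, plus the classical identification of the first nontrivial Steklov spectrum of the ball.
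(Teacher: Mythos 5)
Your proof is correct and follows essentially the same route as the paper: differentiating the explicit formula \eqref{secondeigrad} to get strict monotonicity (your expression $n^2R^{n+1}r^{n-1}/h(r)^2$ simplifies to the paper's $n^2R^{n-1}r^{n-1}/(R^n-r^n)^2$), evaluating the limit at $r=0$ to obtain $1/R=\overline{\sigma}_1(B_R)$, and passing to the pointwise limit in \eqref{eq:radeig} to recover the coordinate functions $x_j$ (the paper merely rewrites $w_j$ in cartesian coordinates as $\bigl(1-r^n/|x|^n\bigr)x_j$ before taking the limit, which is the same computation). No gaps.
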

\begin{proof}
    If we differentiate \eqref{secondeigrad} with respect to $r$, we get
    \begin{equation*}
        \frac{d}{dr}\sigma_2(A_{r,R}) = \frac{n^2R^{n-1}r^{n-1}}{(R^n-r^n)^2}> 0, \qquad r\in (0,R),
    \end{equation*}
    and it is $0$ if and only if $r=0$. Passing to the limit as $r\to 0^+$ we get
    \begin{equation*}
        \sigma_2(A_{r,R}) \to \frac{1}{R} = \overline{\sigma}_1(B_R),
    \end{equation*}
    and in particular we get
    \begin{equation}\label{eq:SleSD}
        \overline{\sigma}_1(B_R)\le \sigma_2(A_{r,R}), \qquad \forall r\in(0,R).
    \end{equation}

Let us now show that the eigenfunctions corresponding to the second Steklov-Dirichlet eigenvalue in $A_{r,R}$ tend to the eigenfunctions corresponding to the first non-trivial Steklov eigenvalue on $B_R$ as $r\to 0^+$. 
Since $Y_j^1(\cdot)$ can be written in cartesian coordinates as
\begin{equation*}
    Y^1_j(x)=\frac{x_j}{\abs{x}} \qquad j=1,...,n,
\end{equation*}
we have that
\begin{equation}\label{eq:radialeigsigma2}
    w_j(x) = \bigg(|x|-\frac{r^n}{|x|^{n-1}}\bigg)\frac{x_j}{|x|}=  \bigg(1-\frac{r^n}{|x|^n}\bigg)x_j, \qquad j=1,...,n.
\end{equation}
In particular we get
\begin{equation*}
    \lim_{r\to 0^+}w_j(x)= x_j, \qquad j=1,...n,
\end{equation*}
which are exactly the eigenfunctions corresponding to the first non-trivial Steklov eigenvalue on the ball $B_R$.
\end{proof}

\subsubsection{\textbf{About the nodal domains}}\label{sec:4}
In this subsection we study what is the number of nodal domains of the eigenfunctions corresponding to the $k-$th Steklov-Dirichlet eigenvalue. Let $\Omega= \Omega_0\setminus\overline{B_r}$ as usual and let us denote by $V_k$ the eigenspace corresponding to $\sigma_k(\Omega)$. 

\begin{defn}
    We will call $\Omega_j$ a nodal domain of $u\in V_k$ in $\Omega$, a connected component of the set $\{x\in \Omega: u(x)\neq 0\}$. In particular we will denote by
    \begin{equation}
        \Delta_k = \max_{v \in V_k} \sharp \{\text{nodal domains of v} \}
    \end{equation}
\end{defn}
Firstly we prove $B_r$ is not well contained in any of the nodal domains, for every $k\ge 2$.
\begin{prop}\label{prop:ballnotnodal}
    Let $k\ge 2$ and let us consider $0\neq u\in V_k$ and let $\Omega_j$ be the nodal domains of $u$ for $j=1,...,m$, for some $m\in \mathbb N$. $\nexists j \in \{1,...,m\}$ such that $B_r\subset\subset \Omega_j$.
\end{prop}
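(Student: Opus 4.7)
My plan is to argue by contradiction. Suppose there exists $j\in\{1,\dots,m\}$ with $B_r \subset\subset \Omega_j$, meaning that $\Omega_j$ completely surrounds the inner ball: $\partial B_r \subset \partial \Omega_j$ and a full annular neighborhood of $\partial B_r$ inside $\Omega$ lies in $\Omega_j$. Up to a sign change, assume $u > 0$ on $\Omega_j$, and introduce the truncation
\[
\tilde u(x) := \begin{cases} u(x), & x \in \Omega_j, \\ 0, & x \in \Omega \setminus \Omega_j. \end{cases}
\]
Since $u$ vanishes on the Dirichlet portion $\partial B_r \subset \partial \Omega_j$ and on the internal nodal boundary $\partial \Omega_j \cap \Omega$ (by definition of nodal component), $\tilde u$ is continuous in $\Omega$, has zero trace on $\partial B_r$, and therefore lies in $H^1_{\partial B_r}(\Omega)$; moreover $\tilde u \ge 0$ and is nontrivial.

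A direct integration by parts on $\Omega_j$, using harmonicity of $u$, the vanishing boundary data on $\partial \Omega_j \setminus \partial \Omega_0$, and the Steklov condition $\partial_\nu u = \sigma_k u$ on $\partial \Omega_0 \cap \partial \Omega_j$, yields
\[
\int_\Omega |\nabla \tilde u|^2\,dx = \int_{\Omega_j}|\nabla u|^2\,dx = \sigma_k \int_{\partial \Omega_0 \cap \partial \Omega_j} u^2\,d\mathcal{H}^{n-1} = \sigma_k \int_{\partial \Omega_0} \tilde u^2\,d\mathcal{H}^{n-1},
\]
so $\tilde u$ realizes the value $\sigma_k$ in the Rayleigh quotient on $H^1_{\partial B_r}(\Omega)$. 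Recall that the orthogonality $\int_{\partial \Omega_0} u\,u_1\,d\mathcal{H}^{n-1}=0$ combined with $u_1>0$ forces $u$ to change sign on $\partial \Omega_0$, so at least a second nodal domain exists and $\Omega_j \neq \Omega$; hence $\tilde u$ is a non-negative, nontrivial element of $H^1_{\partial B_r}(\Omega)$ supported strictly inside $\Omega$.

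To extract the contradiction I plan to combine this identity with three ingredients: (a) $u|_{\Omega_j}$ is the simple, positive first eigenfunction of the induced mixed Steklov-Dirichlet problem on $\Omega_j$, with eigenvalue exactly $\sigma_k$; (b) Hopf's lemma on the smooth component $\partial B_r$ applied to $u$ yields a strictly negative normal-derivative flux through $\partial B_r$, and hence, via Green's identity on $\Omega_j$, the strict relation
\[
\sigma_k\int_{\partial \Omega_0 \cap \partial \Omega_j} u\,d\mathcal{H}^{n-1} = -\int_{\partial B_r}\partial_\nu u\,d\mathcal{H}^{n-1} > 0;
\]
and (c) the orthogonality of $u$ with $u_1$ on $\partial \Omega_0$. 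A competitor built from $\tilde u$ and $u_1$---projected onto the subspace orthogonal to $u_1$ on $\partial \Omega_0$, so as to be admissible in the variational characterization of $\sigma_2(\Omega)$ recalled in \eqref{eq:varcars2}---should then produce a Rayleigh quotient strictly incompatible with the value $\sigma_k$. The main obstacle will be precisely this last quantitative step: turning the qualitative information above (non-negativity of $\tilde u$, its support strictly inside $\Omega$, the strict sign of the flux on $\partial B_r$, and the orthogonality with $u_1$) into a clean strict inequality on the Rayleigh quotient of the chosen competitor that contradicts the value $\sigma_k$ itself.
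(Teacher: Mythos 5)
You have not actually proved the statement: the decisive step is missing, and by your own admission (``the main obstacle will be precisely this last quantitative step'') the contradiction is never derived. Worse, the route you sketch cannot produce one. Carry out your projection explicitly: set $S=\int_{\partial\Omega_0}\tilde u^2\,d\mathcal{H}^{n-1}$, $t=\int_{\partial\Omega_0}\tilde u\,u_1\,d\mathcal{H}^{n-1}$, $N=\int_{\partial\Omega_0}u_1^2\,d\mathcal{H}^{n-1}$, and take the admissible competitor $v=\tilde u-(t/N)u_1$. Using your identity $\int_\Omega\abs{\nabla\tilde u}^2dx=\sigma_k S$ together with $\int_\Omega\nabla\tilde u\cdot\nabla u_1\,dx=\sigma_1 t$ (weak formulation of $u_1$ tested with $\tilde u$), one finds
\begin{equation*}
\frac{\ds\int_\Omega\abs{\nabla v}^2\,dx}{\ds\int_{\partial\Omega_0}v^2\,d\mathcal{H}^{n-1}}
=\frac{\sigma_k S-\sigma_1 t^2/N}{S-t^2/N}
=\sigma_k+\frac{(\sigma_k-\sigma_1)\,t^2/N}{S-t^2/N}\;\ge\;\sigma_k
\end{equation*}
(provided $S-t^2/N>0$; if it vanishes, $v$ has zero boundary norm and gives no information at all). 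So the Rayleigh quotient of your competitor sits \emph{above} $\sigma_k\ge\sigma_2$, which is exactly what the variational characterization \eqref{eq:varcars2} permits: there is no tension to exploit. Your Hopf-lemma ingredient only yields $t>0$, which, since $\sigma_1$ is simple and hence $\sigma_k>\sigma_1$, pushes the quotient \emph{strictly} above $\sigma_k$, i.e.\ in the harmless direction; and this is the only place where the hypothesis $B_r\subset\subset\Omega_j$ enters your scheme. Truncation plus orthogonalization is the right tool for Courant-type bounds on the \emph{number} of nodal domains (it is how Proposition \ref{prop:nodaldomains} is obtained), but it is blind to the position of the hole, which is precisely what Proposition \ref{prop:ballnotnodal} is about.

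The paper's proof is entirely different, purely local and much more elementary. If $B_r\subset\subset\Omega_j$ and, say, $u>0$ in $\Omega_j$, then $u$ is harmonic in $\Omega_j$ and vanishes on the two disjoint boundary pieces $\partial B_r$ and $\partial\Omega_j\cap\Omega$; this forces a relative maximum of $u$ at some interior point $x_0\in\Omega_j$, where the strong maximum principle makes $u$ constant in a neighbourhood of $x_0$; real analyticity of harmonic functions then propagates the constant to all of $\Omega_j$, so $u\equiv0$ there, contradicting the definition of a nodal domain. No test functions, no Rayleigh quotients, no Hopf lemma and no orthogonality to $u_1$ are needed: the hypothesis that $\Omega_j$ encloses the hole is used exactly where your argument never uses it, namely to guarantee that $u$ vanishes on a whole boundary component surrounding $B_r$ in addition to the nodal part of $\partial\Omega_j$.
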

\begin{proof}
    By contradiction, let us suppose that $B_r \subset\subset \Omega_j$ for some $j$. We know that $u$ is a harmonic function in $\Omega_j$ that is zero in $\partial B_r$ and $\partial \Omega_j \cap \Omega$. In $\Omega_j$, we have $u>0$ or $u<0$ and for simplicity we will consider only case $u>0$, since the other case follows analogously. Since $u>0$, its minimum is $0$ and the maximum is achieved on $\partial\Omega_j\cap\partial\Omega_0$, but since $u$ is smooth and zero on the two disconnected boundary components $\partial B_r$ and $\partial \Omega_j \cap \Omega$, then it follows that  there exists a relative maximum $x_0$ in $\Omega_j$. This implies that $x_0$ is a global maximum in an open neighbourhood $U_{x_0}$ of $x_0$. Since $u|_{U_{x_0}}$ is harmonic, by the maximum principle it follows that $u$ is constant in $U_{x_0}$. Since harmonic functions are analytic functions as well, this implies $u$ constant in $\Omega_j$ and consequently $u$ is zero in $\Omega_j$. This is a contradiction since it goes against the definition of nodal domain.  
\end{proof}
The previous proposition allows us to define the \textit{boundary nodal domains} of $u\in V_k$ as the set $(\partial \Omega_j \cap \overline{\Omega})\setminus \partial \Omega_0$. The next
proposition is general and gives an upper bound for the number of nodal domains that an eigenfunction corresponding to the $k$-th eigenvalue can have. The arguments follow exactly the same lines contained in \cite{AM1994,KS1969}, with little suitable modifications, since the presence of a hole in $\Omega_0$. 

\begin{prop}\label{prop:nodaldomains}
    For any $l\ge 1$ we have the following inequality
    \begin{equation*}
        \Delta_{l+1}\le 1+ \sum_{j=1}^l m_j,
    \end{equation*}
where $m_j$ is the multiplicity of the $j-$th eigenvalue.
\end{prop}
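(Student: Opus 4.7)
Plan. I would adapt the classical Courant nodal domain argument (cf.\ \cite{AM1994, KS1969}) to the Steklov--Dirichlet setting, proceeding by contradiction. Assume that some $u \in V_{l+1}$ has $N \geq 2 + \sum_{j=1}^l m_j$ nodal domains $\Omega_1, \dots, \Omega_N$, and set $M := 1 + \sum_{j=1}^l m_j$. For $i = 1,\dots,M$ define $\varphi_i := u\,\chi_{\Omega_i}$, the restriction of $u$ to $\Omega_i$ extended by zero to the rest of $\Omega$. Since $u$ vanishes continuously on the nodal boundary $\partial \Omega_i \cap \Omega$ and on $\partial B_r$ (by the Dirichlet condition), each $\varphi_i$ lies in $H^1_{\partial B_r}(\Omega)$ with weak gradient $\nabla u\,\chi_{\Omega_i}$; Proposition \ref{prop:ballnotnodal} ensures the geometry is non-degenerate, since no nodal domain engulfs the hole. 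The $\varphi_i$'s have pairwise disjoint supports, so they span an $M$-dimensional subspace $W \subset H^1_{\partial B_r}(\Omega)$.

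Now fix an $L^2(\partial \Omega_0)$-orthonormal basis $\{u^{(k)}\}_{k=1}^{M-1}$ of the direct sum $V_1 \oplus \cdots \oplus V_l$. Imposing the $M-1$ linear conditions $\int_{\partial \Omega_0} v\,u^{(k)}\,d\mathcal{H}^{n-1} = 0$ on an element $v = \sum_{i=1}^M c_i \varphi_i \in W$ yields a homogeneous linear system with one more unknown than equation, hence admits a non-trivial solution $v \neq 0$. This $v$ is admissible in the min--max characterization of $\sigma_{l+1}$. Testing the weak Steklov--Dirichlet equation for $u$ against each $\varphi_i$ gives
\begin{equation*}
\int_{\Omega_i} |\nabla u|^2\,dx \;=\; \sigma_{l+1}\int_{\partial \Omega_0 \cap \partial \Omega_i} u^2\,d\mathcal{H}^{n-1},
\end{equation*}
and the disjoint-support structure then produces the Rayleigh quotient equality
\begin{equation*}
\frac{\int_\Omega |\nabla v|^2\,dx}{\int_{\partial \Omega_0} v^2\,d\mathcal{H}^{n-1}} \;=\; \frac{\sum_i c_i^2 \int_{\Omega_i}|\nabla u|^2}{\sum_i c_i^2 \int_{\partial \Omega_0 \cap \partial \Omega_i} u^2} \;=\; \sigma_{l+1}.
\end{equation*}
The denominator is strictly positive, since otherwise some $\Omega_i$ would carry zero trace on $\partial \Omega_0$, forcing $u$ to satisfy zero Dirichlet data on the whole of $\partial \Omega_i$, hence $u \equiv 0$ on $\Omega_i$ by the maximum principle, contradicting that $\Omega_i$ is a nodal domain.

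By the min--max principle $v$ realizes the infimum defining $\sigma_{l+1}$, so $v$ itself is an eigenfunction corresponding to $\sigma_{l+1}$ and, in particular, harmonic in the connected open set $\Omega$. However, by construction $v \equiv 0$ on the non-empty open set $\Omega_{M+1}$, so unique continuation of harmonic functions forces $v \equiv 0$ on $\Omega$, contradicting $v \neq 0$. The main obstacle is the construction step: verifying that each $\varphi_i$ is a genuine element of $H^1_{\partial B_r}(\Omega)$ and that no nodal domain is decoupled from $\partial \Omega_0$. Both points depend essentially on the structural information of Proposition \ref{prop:ballnotnodal}, which rules out the only pathological configuration---a nodal domain wrapping around the Dirichlet boundary $\partial B_r$---that could invalidate the cut-off procedure or cause the Rayleigh denominator to degenerate.
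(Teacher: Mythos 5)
Your proof is correct and takes essentially the same route as the paper's: the Courant-type contradiction argument that builds $v=\sum_i c_i\, u\chi_{\Omega_i}$ from restrictions of $u$ to nodal domains, chooses the coefficients to meet the $\sum_{j=1}^l m_j$ orthogonality constraints on $\partial\Omega_0$, shows via the weak equation that the Rayleigh quotient of $v$ equals $\sigma_{l+1}$ so that $v$ is itself an eigenfunction, and then derives a contradiction because $v$ vanishes identically on the unused nodal domain while being harmonic in the connected set $\Omega$. The only cosmetic difference is that you test the weak formulation against each $\varphi_i$ rather than integrating by parts over $\partial\Omega_i$ as the paper does, which if anything is cleaner since it sidesteps regularity of the nodal boundaries.
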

In the previous Subsection we have seen what is the explicit expression \eqref{eq:radialeigsigma2} of the eigenfunctions corresponding to the second Steklov-Dirichlet eigenvalue on the spherical shell. In particular, by \eqref{eq:radialeigsigma2} it is straightforward to see that the nodal domains are exactly $2$. Now, as a consequence of Proposition \ref{prop:nodaldomains}, we want to prove that it holds true for any $\Omega$.
\begin{cor}\label{cor:nodaldomains2}
    The eigenfunctions associated to $\sigma_2(\Omega)$ have only two nodal domains.
\end{cor}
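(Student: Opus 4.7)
The plan is to combine two bounds on $\Delta_2$: an upper bound coming straight from Proposition \ref{prop:nodaldomains}, and a lower bound obtained from the orthogonality condition that defines $\sigma_2(\Omega)$.

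For the upper bound, I would simply apply Proposition \ref{prop:nodaldomains} with $l=1$. Since $\sigma_1(\Omega)$ is simple (recalled in subsection \ref{subsec:SD}, following \cite{PaPiSa}), one has $m_1=1$, and therefore $\Delta_2\le 1+m_1=2$.

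For the lower bound, I would argue by contradiction: suppose some nontrivial $u_2\in V_2$ has a single nodal domain, so that, up to a sign, $u_2\ge 0$ in $\Omega$. By the variational characterization \eqref{eq:varcars2}, $u_2$ is orthogonal to a first eigenfunction $u_1$ on $\partial\Omega_0$, i.e.\ $\int_{\partial\Omega_0}u_1u_2\,d\mathcal H^{n-1}=0$. Since $u_1$ does not change sign, I would use the strong maximum principle together with the Steklov boundary condition $\partial u_1/\partial\nu=\sigma_1 u_1$ to show that $u_1>0$ pointwise on $\partial\Omega_0$ (if $u_1$ vanished at a boundary point $x_0$, Hopf's lemma would give $\partial u_1/\partial\nu(x_0)<0$ while the boundary condition forces it to be $0$). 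Together with $u_2\ge 0$, the vanishing of the boundary integral then forces $u_2\equiv 0$ on $\partial\Omega_0$.

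At this point $u_2$ is a harmonic function in $\Omega$ vanishing on the entire boundary $\partial\Omega_0\cup\partial B_r$, so by the uniqueness of the Dirichlet problem $u_2\equiv 0$ in $\Omega$, contradicting the assumption $u_2\in V_2\setminus\{0\}$. Hence $\Delta_2\ge 2$, and combined with the upper bound this yields $\Delta_2=2$, as claimed. I do not anticipate a serious obstacle here: the only delicate point is establishing $u_1>0$ on $\partial\Omega_0$ under mere Lipschitz regularity, but this is essentially part of the known properties of $u_1$ quoted in the preliminaries and can be invoked directly.
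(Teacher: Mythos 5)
Your overall skeleton coincides with the paper's: the upper bound $\Delta_2\le 1+m_1=2$ is obtained exactly as in the paper, by applying Proposition \ref{prop:nodaldomains} with $l=1$ and the simplicity of $\sigma_1(\Omega)$, and the remaining task is the lower bound, i.e.\ that no nontrivial $u_2\in V_2$ can have a single nodal domain. Your reduction of that case is also fine as far as it goes: a single nodal domain forces $u_2\ge 0$ (up to sign), the orthogonality $\int_{\partial\Omega_0}u_1u_2\,d\mathcal H^{n-1}=0$ from \eqref{eq:varcars2} is legitimate for every $u_2\in V_2$, and once one knows $u_2\equiv 0$ on $\partial\Omega_0$, testing the weak formulation with $u_2$ itself gives $\int_\Omega|\nabla u_2|^2\,dx=\sigma_2\int_{\partial\Omega_0}u_2^2\,d\mathcal H^{n-1}=0$, hence $u_2\equiv 0$, the desired contradiction.

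The genuine gap is the step ``$u_1>0$ pointwise (or $\mathcal H^{n-1}$-a.e.) on $\partial\Omega_0$''. The paper only assumes $\partial\Omega_0$ Lipschitz, and under that hypothesis your Hopf-lemma argument does not run: Hopf's lemma requires an interior ball condition, which a Lipschitz boundary need not satisfy even at $\mathcal H^{n-1}$-a.e.\ point (it already fails at convex corners), and the Steklov condition $\partial u_1/\partial\nu=\sigma_1u_1$ holds only in the weak (variational) sense, so it cannot be evaluated at an individual boundary point $x_0$. Moreover, your fallback claim is not accurate: the property quoted in subsection \ref{subsec:SD} (from \cite{PaPiSa}) is only that first eigenfunctions do not change sign \emph{in} $\Omega$; positivity of the trace on $\partial\Omega_0$ is a strictly stronger statement, and for Lipschitz domains it amounts to a boundary unique-continuation assertion (on the trace's zero set both $u_1$ and $\partial u_1/\partial\nu=\sigma_1u_1$ would vanish), which is a nontrivial theorem, not something you can invoke ``directly''. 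The paper sidesteps this entirely: for the lower bound it cites the result of \cite{GS2023} that an eigenfunction has constant sign if and only if it is a first eigenfunction, so every $u_2\in V_2$ is sign-changing and $\Delta_2\ge 2$ follows at once. To repair your argument you should either cite that result as the paper does, or supply a genuine proof of the a.e.\ positivity of the trace of $u_1$ on $\partial\Omega_0$ under Lipschitz regularity.
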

\begin{proof}
    Let $v \in V_2$ be an eigenfunction associated to $\sigma_2 (\Omega)$. In \cite{GS2023} it has been proved in the Steklov-Robin case (and so even in the Steklov-Dirichlet case when the Robin parameter goes to $+\infty$)  that an eigenfunction does not change sign if and only if it is a first eigenfunction. This implies that $v$ is sign changing, which means that $\Delta_2 \ge 2$. Applying Proposition \ref{prop:nodaldomains}, we have \begin{equation*} \Delta_2 = 2.
    \end{equation*}
\end{proof}

\subsection{A corrector function} In this subsection we consider a particular function (which depends on the dimension of the space) that will play a central role in proving the main results, that was introduced by \cite{CM1997}.
Let $\varepsilon>0$ be small enough and let us consider $\Omega_{r_\varepsilon}= \Omega_0\setminus B_{r_\varepsilon}$, where $B_{r_\varepsilon}$ is a ball centered at the origin with radius $0<r_\varepsilon<\varepsilon$, such that $\overline{B}_{r_\varepsilon}\subset \Omega_0$. We define the following corrector function in $H^1(\Omega_0)$
\begin{equation}\label{eq:omegaeps}
    \omega^\varepsilon(x) = \begin{cases}
        0 & \,\text{in}\,B_{r_\varepsilon}\\ 
        \rho_n(\abs{x})& \;\text{in}\; A_{r_{\varepsilon},\varepsilon}\\ 
        1 & \,\text{in}\, \Omega_0\setminus B_\varepsilon,
    \end{cases}
\end{equation}
where $\rho_n(x)$ is the fundamental solution of the Laplacian in $A_{r_{\varepsilon},\varepsilon}$, i.e. solves the following PDE
\begin{equation*}
    \begin{cases}
        \Delta w=0  & \;\text{in}\;A_{r_{\varepsilon},\varepsilon}\\
        w=0 & \;\text{on}\; \partial B_{r_\varepsilon}\\
        w=1 & \;\text{on}\; \partial B_\varepsilon.
    \end{cases}
\end{equation*}
In particular, it can be explicitly computed and it is given by
\begin{equation*}
       \rho_n(\abs{x})= \begin{cases}
\displaystyle\frac{\log(\frac{\abs{x}}{r_\varepsilon})}{\log(\frac{\varepsilon}{r_\varepsilon})} & \, n=2\\ \\
\displaystyle\frac{r_\varepsilon^{-n+2}-\abs{x}^{-n+2}}{r_\varepsilon^{-n+2}-\varepsilon^{-n+2}} & \, n\ge 3.
        \end{cases}
\end{equation*}
The following lemma will be a key point to prove the second  main result of the paper (see \cite{CM1997}).
\begin{lem}\label{lemma:omega}
    Let $\omega^\varepsilon$ defined as above. Let us assume that
    \begin{equation}\label{eq:r=o(eps)}
    \begin{cases}
        r_\varepsilon = o(\varepsilon) & n=2\\
        r_\varepsilon = o(\varepsilon^{\frac{n}{n-2}}) & n\ge 3.
    \end{cases}
        \end{equation}
    Then we have 
    \begin{equation}\label{eq:normomega}
        \lim_{\varepsilon\to 0^+}\|\omega^\varepsilon\|_{L^2(\Omega_0)} = |\Omega_0|,
    \end{equation}
which is equivalent to $\omega^\varepsilon\to 1$ for a.e. $x\in\Omega$. Moreover \begin{equation}\label{eq:normnablaomega}
    \lim_{\varepsilon\to 0^+}\|\nabla \omega^\varepsilon\|_{L^2(\Omega_0)} = 0.
\end{equation}
\end{lem}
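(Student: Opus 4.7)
The plan is to handle the two convergences separately, both by direct radial computations, since $\omega^\varepsilon$ is radial on the annulus where it is non-trivial.

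For the $L^2$ part, the key observation is that $\omega^\varepsilon$ coincides with $1$ on $\Omega_0 \setminus B_\varepsilon$, while on the annulus $A_{r_\varepsilon,\varepsilon}$ it equals $\rho_n(|x|)$, which by the maximum principle takes values in $[0,1]$ (since its boundary values are $0$ and $1$). Consequently, for every fixed $x \in \Omega_0 \setminus \{0\}$ we have $\omega^\varepsilon(x) = 1$ for all $\varepsilon$ small enough. Since $|\omega^\varepsilon| \le 1$ and $\Omega_0$ has finite measure, dominated convergence delivers $\omega^\varepsilon \to 1$ in $L^2(\Omega_0)$, which gives the statement in \eqref{eq:normomega} (interpreting the displayed equality as the standard $L^2$-convergence $\|\omega^\varepsilon\|_{L^2(\Omega_0)}^2 \to |\Omega_0|$).

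For the gradient part, note that $\nabla \omega^\varepsilon \equiv 0$ outside $A_{r_\varepsilon,\varepsilon}$, so everything reduces to computing $\int_{A_{r_\varepsilon,\varepsilon}} |\nabla \rho_n|^2 \, dx$ in polar coordinates. In the planar case, from $\rho_2'(s) = 1/(s \log(\varepsilon/r_\varepsilon))$ one gets, after integrating $|\rho_2'(s)|^2 s \, ds$ from $r_\varepsilon$ to $\varepsilon$,
\begin{equation*}
    \int_{A_{r_\varepsilon,\varepsilon}} |\nabla \rho_2|^2 \, dx = \frac{2\pi}{\log(\varepsilon/r_\varepsilon)},
\end{equation*}
which tends to $0$ precisely because $r_\varepsilon = o(\varepsilon)$ forces $\log(\varepsilon/r_\varepsilon) \to +\infty$. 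For $n \ge 3$, using $\rho_n'(s) = (n-2)\,s^{-(n-1)}/(r_\varepsilon^{-(n-2)} - \varepsilon^{-(n-2)})$ and integrating $|\rho_n'(s)|^2 s^{n-1} \, ds$ (the telescoping of the $s$-powers is the key simplification) gives
\begin{equation*}
    \int_{A_{r_\varepsilon,\varepsilon}} |\nabla \rho_n|^2 \, dx = \frac{(n-2)\,n\,\omega_n}{r_\varepsilon^{-(n-2)} - \varepsilon^{-(n-2)}}.
\end{equation*}

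Since the hypothesis $r_\varepsilon = o(\varepsilon^{n/(n-2)})$ entails in particular $r_\varepsilon/\varepsilon \to 0$, one has $r_\varepsilon^{-(n-2)} - \varepsilon^{-(n-2)} = r_\varepsilon^{-(n-2)}\bigl(1 - (r_\varepsilon/\varepsilon)^{n-2}\bigr) \sim r_\varepsilon^{-(n-2)} \to +\infty$, so the integral vanishes. There is no real obstacle here beyond the elementary integration; the whole point is to verify that the chosen rates on $r_\varepsilon$ are exactly those which make the denominators blow up. (In fact, for this lemma alone $r_\varepsilon = o(\varepsilon)$ would already suffice; the stronger rate in \eqref{eq:r=o(eps)} is presumably dictated by the later use of $\omega^\varepsilon$ as a cutoff in test-function constructions for Theorem \ref{thm:main}.)
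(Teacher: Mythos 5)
Your proof is correct, and it splits from the paper's own argument in an interesting way on the first half. For the gradient estimate you do essentially what the paper does: an explicit radial computation of $\int_{A_{r_\varepsilon,\varepsilon}}|\nabla\rho_n|^2\,dx$ in polar coordinates. The paper carries this out only for $n=2$ (obtaining the same value $2\pi/\log(\varepsilon/r_\varepsilon)$) and asserts that higher dimensions follow the same lines; your closed form $(n-2)\,n\,\omega_n/\bigl(r_\varepsilon^{-(n-2)}-\varepsilon^{-(n-2)}\bigr)$ fills in that omitted case correctly. For the $L^2$ statement, however, the paper again computes explicitly: by the coarea formula and two integrations by parts it gets, for $n=2$,
\begin{equation*}
\int_{A_{r_\varepsilon,\varepsilon}}|\omega^\varepsilon|^2\,dx=\pi\varepsilon^2-\frac{\pi\varepsilon^2}{\log(\varepsilon/r_\varepsilon)}+\frac{\pi}{2}\,\frac{\varepsilon^2-r_\varepsilon^2}{\log^2(\varepsilon/r_\varepsilon)}\longrightarrow 0,
\end{equation*}
and concludes via $|\Omega_0\setminus B_\varepsilon|\to|\Omega_0|$; you instead avoid all computation by noting $0\le\omega^\varepsilon\le 1$, that $\omega^\varepsilon(x)=1$ as soon as $\varepsilon<|x|$, and applying dominated convergence. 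Your route is shorter, works in every dimension at once, and proves the a.e.\ convergence directly (which is the form actually invoked later), at the price of losing the explicit convergence rate that the paper's computation exhibits. Two side remarks of yours are also accurate: the displayed equality \eqref{eq:normomega} must indeed be read as $\|\omega^\varepsilon\|_{L^2(\Omega_0)}^2\to|\Omega_0|$ (the paper itself conflates the norm with its square), and the lemma genuinely needs only $r_\varepsilon=o(\varepsilon)$, the critical scaling \eqref{eq:r=o(eps)} being relevant only for the later use of $\omega^\varepsilon$ in the proof of Theorem \ref{thm:main}.
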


\section{The main results}\label{sec:3}

Now we are able to give the proof of the main result of the paper.
\subsection{Results related to $\sigma_1$ and the corresponding eigenfunctions}
The first step is to prove that the normalized eigenfunctions corresponding to $\Omega_r= \Omega_0\setminus\overline{B}_r$ converge in $H^1(\Omega_0)$ (we assume that the eigenfunctions are zero in $B_r$) to a particular constant.
\begin{prop}\label{prop:firsteig}
    Let $\Omega=\Omega_0\setminus\overline{B}_r$, where $\Omega_0$ is an open, bounded and connected set with Lipschitz boundary, and $B_r$ a ball of radius $r>0$ centered at the origin, such that $\overline{B}_r \subset \Omega_0$. If we denote by $\{u_1^{r}\}$ the positive eigenfunctions corresponding to $\sigma_1(\Omega_r)$, such that $\|u_1^{r}\|_{L^2(\partial\Omega_0)}=1$, then we have
    \begin{equation}
        u_1^{r} \to c_{\Omega_0} = \frac{1}{\sqrt{P(\Omega_0)}} \qquad  \text{in}\;\;H^1(\Omega_0),\; \text{as}\;\; r\to0^+.
    \end{equation}
    \end{prop}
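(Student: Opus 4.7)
The plan is to use the corrector function of Lemma \ref{lemma:omega} to control $\sigma_1(\Omega_r)$ from above, then extract a weakly convergent subsequence in $H^1(\Omega_0)$ and identify the limit via the normalization on $\partial\Omega_0$.

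\textbf{Step 1 (energy decay of $\sigma_1(\Omega_r)$).} For $r>0$ small, choose $\varepsilon=\varepsilon(r)$ so that $r_\varepsilon=r$ and \eqref{eq:r=o(eps)} holds (e.g.\ $\varepsilon=\sqrt r$ for $n=2$, $\varepsilon=r^{(n-2)/n}\cdot r^{1/n}$ suitably adjusted for $n\ge 3$). For $r$ small enough one has $\overline B_\varepsilon\subset\subset\Omega_0$, hence $\omega^\varepsilon\equiv 1$ on $\partial\Omega_0$; moreover $\omega^\varepsilon\in H^1_{\partial B_r}(\Omega_r)$ since $\omega^\varepsilon\equiv 0$ on $B_r$. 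Testing in the variational characterization \eqref{eq:carvarSD} with $k=1$:
\begin{equation*}
\sigma_1(\Omega_r)\le \frac{\int_{\Omega_0}|\nabla \omega^\varepsilon|^2\,dx}{\int_{\partial\Omega_0}(\omega^\varepsilon)^2\,d\mathcal H^{n-1}}=\frac{\|\nabla\omega^\varepsilon\|_{L^2(\Omega_0)}^2}{P(\Omega_0)}\xrightarrow[r\to 0^+]{}0
\end{equation*}
by \eqref{eq:normnablaomega}.

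\textbf{Step 2 ($H^1$-boundedness).} Extending $u_1^r$ by zero in $B_r$, the eigenvalue equation gives $\|\nabla u_1^r\|_{L^2(\Omega_0)}^2=\sigma_1(\Omega_r)\to 0$. Combined with the normalization $\|u_1^r\|_{L^2(\partial\Omega_0)}=1$, Friedrich's inequality \eqref{friedin} yields
\begin{equation*}
\|u_1^r\|_{L^2(\Omega_0)}\le C\bigl(\|\nabla u_1^r\|_{L^2(\Omega_0)}+1\bigr),
\end{equation*}
so $\{u_1^r\}$ is bounded in $H^1(\Omega_0)$.

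\textbf{Step 3 (identification of the limit).} Along a subsequence $r_k\to 0^+$, $u_1^{r_k}\rightharpoonup u$ weakly in $H^1(\Omega_0)$, strongly in $L^2(\Omega_0)$, and strongly in $L^2(\partial\Omega_0)$ by compactness of the trace. By weak lower semicontinuity, $\|\nabla u\|_{L^2(\Omega_0)}\le \liminf_{k}\|\nabla u_1^{r_k}\|_{L^2(\Omega_0)}=0$, so $\nabla u\equiv 0$. Since $\Omega_0$ is connected, $u\equiv c$ is constant; strong $L^2(\partial\Omega_0)$-convergence together with $u_1^r\ge 0$ forces $c\ge 0$ and $c^2 P(\Omega_0)=1$, whence $c=c_{\Omega_0}=1/\sqrt{P(\Omega_0)}$.

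\textbf{Step 4 (upgrade to strong $H^1$-convergence and uniqueness).} We have $u_1^{r_k}\to c_{\Omega_0}$ in $L^2(\Omega_0)$ and $\|\nabla u_1^{r_k}\|_{L^2(\Omega_0)}\to 0=\|\nabla c_{\Omega_0}\|_{L^2(\Omega_0)}$, so convergence is strong in $H^1(\Omega_0)$ along the subsequence. Since the limit is uniquely determined, a standard subsequence argument shows the whole family $\{u_1^r\}_r$ converges to $c_{\Omega_0}$ in $H^1(\Omega_0)$ as $r\to 0^+$.

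The main obstacle is Step 1: one must verify that for each $r$ small enough the corrector $\omega^\varepsilon$ can legitimately be used as a test function (the scaling between $r$ and $\varepsilon$ must satisfy \eqref{eq:r=o(eps)}, and $B_\varepsilon$ must be well contained in $\Omega_0$). The remaining steps are routine consequences of elliptic compactness and the trace theorem.
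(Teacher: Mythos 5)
Your proof is correct, and Steps 2--4 coincide with the paper's own argument: energy decay plus the normalization give $H^1(\Omega_0)$-boundedness of the zero-extensions via Friedrich's inequality \eqref{friedin}, compactness (Rellich and the trace operator) identifies the limit of a subsequence as a constant, the normalization and positivity force the constant to be $1/\sqrt{P(\Omega_0)}$, the vanishing of $\|\nabla u_1^r\|_{L^2}^2=\sigma_1(\Omega_r)$ upgrades the convergence to strong $H^1$, and uniqueness of the limit removes the subsequence. The only place you genuinely diverge is the preliminary fact $\sigma_1(\Omega_r)\to 0$: the paper gets it from monotonicity with respect to inclusion of the outer domain, $0<\sigma_1(\Omega_r)\le\sigma_1(A_{r,R_m})$ with $R_m=\dist(0,\partial\Omega_0)$, combined with the explicit radial formula \eqref{eq:radeigenvf}, whereas you test the Rayleigh quotient with the corrector $\omega^\varepsilon$ of Lemma \ref{lemma:omega}, coupling $\varepsilon=\varepsilon(r)$ so that \eqref{eq:r=o(eps)} holds. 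Your route is equally valid (and is in the spirit of the paper's own use of $\omega^\varepsilon$ for $\sigma_2$ in Theorem \ref{thm:main}), at the cost of justifying that $\omega^\varepsilon|_{\Omega_r}\in H^1_{\partial B_r}(\Omega_r)$ and of choosing the coupling; the paper's route is more elementary since the annulus eigenvalue is explicit. One slip to fix: your sample coupling for $n\ge 3$, namely $\varepsilon=r^{(n-2)/n}\cdot r^{1/n}=r^{(n-1)/n}$, does \emph{not} satisfy $r=o\big(\varepsilon^{n/(n-2)}\big)$ --- one needs $\varepsilon=r^{\alpha}$ with $0<\alpha<(n-2)/n$, e.g. $\varepsilon=r^{(n-2)/(2n)}$; since all you need is the existence of one admissible coupling, this is harmless, but the exponent should be corrected.
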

\begin{proof}
    Firstly, we recall that $\sigma_1(\Omega_r)\to 0$ as $r\to 0^+$ as it was already proved in \cite{gavitone2021isoperimetric}. Here we briefly recall the proof as it easily follows by the monotonicity of the eigenvalues with respect to the set inclusion. Indeed, if we consider the annulus $A_{{r},R_m}$, where $R_m$ is the minimum distance of the origin from the boundary of $\Omega_0$, then $A_{{r},R_m}\subset \Omega_{r}$ and we have
    \begin{equation}\label{eq:sigmatozero}
        0<\sigma_1(\Omega_{r})\le \sigma_1(A_{{r},R_m}) \to 0, \qquad \text{as}\; r\to 0^+,
    \end{equation}
    where we recall that the explicit expression of $\sigma_1(A_{{r},R_m})$ is written in \eqref{eq:radeigenvf}.
    By the variational characterization of the first eigenvalue, the normalization constraint and \eqref{eq:sigmatozero}, we have that $\|\nabla u_1^{r}\|_{L^2(\Omega_0)}\le 1 $ for every ${r}$ small enough. Moreover, extending to $0$ in $B_{r}$, by the Friederich's inequality \eqref{friedin} we have that the sequence $\{u_1^{r}\}_e$ is equibounded in $L^2(\Omega_0)$. Therefore, there exists a subsequence, still denoted by $\{u_1^{r}\}_r$ that converges strongly in $L^2(\Omega_0)$ to some $\Bar{u}\in H^1(\Omega_0)$ and such that $\nabla u_1^{r} \rightharpoonup \nabla \Bar{u}$ weakly in $L^2(\Omega_0)$. Actually, the convergence of the gradients is strong, since
    \begin{equation*}
        0\le \|\nabla u^{r}_1\|^2_{L^2(\Omega_0)}= \sigma(\Omega_{r})\to 0 \qquad \text{as}\; r \to 0^+.
    \end{equation*}
    Moreover, by the compactness of the trace operator we have that $u_1^{r}$ converges strongly in $L^2(\partial \Omega_0)$ and almost everywhere in $\partial \Omega_0$. Therefore, it must be
    \begin{equation*}
        \Bar{u}= c_{\Omega_0} \in (0,+\infty).
    \end{equation*}
    In particular, the normalization condition yields
    \begin{equation*}
        c_{\Omega_0} = \frac{1}{\sqrt{P(\Omega_0)}}.
    \end{equation*}
    Since the limit does not depend on the choice of the subsequence, we have the strong convergence in $H^1(\Omega_0)$ of $u_1^{r}$ to $\bar{u}$ and this concludes the proof.

\end{proof}

\begin{rem}\label{rem_constants}
    Let $A_{r,R_M}$ be the spherical shell having the same measure as $\Omega$. Then we can compare the two constants $c_\Omega$ and $c_{B_{R_M}}$. Indeed, since $\abs{\Omega_0}=\abs{B_{R_M}}$, by the isoperimetric inequality
    \begin{equation}\label{eq:isoperimetricconstantmeasure}
        c_{\Omega_0} = \frac{1}{\sqrt{P(\Omega_0)}} \le \frac{1}{\sqrt{P(B_R)}}= c_{B_{R_M}},
    \end{equation}
    and the equality case holds if and only if $\Omega_0=B_{R_M}$.
    We stress that a perimeter constraint would imply $c_{\Omega_0} = c_{B_{R_P}}.$
\end{rem}

\begin{lem}\label{rem:eigenfunctionradialindependence}
Let $\Omega_0$ be an open, bounded set with Lipschitz boundary in $\mathbb R^n$. Let $v_{r}$ be the eigenfunction corresponding to $\sigma_1(A_{r,\Tilde{R}})$, with $\Tilde{R}>0$ large enough so that $\Omega_0\subset\subset B_{\Tilde{R}}$. Moreover, let be $\|v_{r}\|_{L^2(\partial B_{R_M})}=1$, where $B_{R_M}$ is the ball having the same measure as $\Omega_0$. Then we have 
 \begin{equation}\label{eq:denominatorisoperimetric}        \int_{\partial\Omega_0} v_{r}^2\,d\mathcal{H}^{n-1}\to c^2_{B_{R_M}}P(\Omega_0), \qquad r\to 0^+.
    \end{equation}
\end{lem}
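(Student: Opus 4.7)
The plan is to exploit the explicit radial form of the first Steklov--Dirichlet eigenfunction on the spherical shell $A_{r,\tilde R}$. By \eqref{eq:radeigenvf}, the positive eigenfunction is radial in $s=|x|$ and is proportional to $\ln(s/r)$ when $n=2$ and to $r^{-(n-2)}-s^{-(n-2)}$ when $n\ge 3$. Imposing the normalization $\|v_r\|_{L^2(\partial B_{R_M})}=1$ fixes the multiplicative constant explicitly: for $n\ge 3$ one reads
\begin{equation*}
v_r(x)=\frac{r^{-(n-2)}-|x|^{-(n-2)}}{\sqrt{P(B_{R_M})}\left(r^{-(n-2)}-R_M^{-(n-2)}\right)},
\end{equation*}
with an analogous expression $v_r(x)=\ln(|x|/r)\left[\sqrt{P(B_{R_M})}\,\ln(R_M/r)\right]^{-1}$ in dimension two.

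The core step is then an elementary pointwise limit. For any fixed $s>0$,
\begin{equation*}
\frac{r^{-(n-2)}-s^{-(n-2)}}{r^{-(n-2)}-R_M^{-(n-2)}}=\frac{1-(r/s)^{n-2}}{1-(r/R_M)^{n-2}}\longrightarrow 1\qquad\text{as }r\to 0^+,
\end{equation*}
and the analogous ratio $\ln(s/r)/\ln(R_M/r)$ converges to $1$ in the planar case. I would then derive the direct estimate
\begin{equation*}
\abs{v_r(x)-c_{B_{R_M}}}\le c_{B_{R_M}}\,\frac{\abs{(r/|x|)^{n-2}-(r/R_M)^{n-2}}}{1-(r/R_M)^{n-2}}
\end{equation*}
(and its logarithmic counterpart), which shows that $v_r\to c_{B_{R_M}}=1/\sqrt{P(B_{R_M})}$ uniformly on every compact subset of $\mathbb R^n\setminus\{0\}$.

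Finally, since $0\in B_r\subset\Omega_0$ for every $r>0$ in the admissible range, the boundary $\partial\Omega_0$ is compact and contained in some annulus $\{d_1\le|x|\le d_2\}$ with $0<d_1\le d_2<\tilde R$, where $d_1,d_2$ depend only on $\Omega_0$. Hence $v_r^2\to c_{B_{R_M}}^2$ uniformly on $\partial\Omega_0$, and integrating with respect to $\mathcal H^{n-1}$ yields
\begin{equation*}
\int_{\partial\Omega_0}v_r^2\,d\mathcal H^{n-1}\longrightarrow c_{B_{R_M}}^2\,P(\Omega_0),
\end{equation*}
which is the claim. I do not anticipate any serious obstacle: the argument is a direct computation with the explicit radial eigenfunction. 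The only care needed is to handle the cases $n=2$ and $n\ge 3$ separately, but both reduce to the same elementary limit and to the crucial fact that $\partial\Omega_0$ stays at positive distance from the origin as $r\to 0^+$.
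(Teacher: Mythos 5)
Your proof is correct, but it takes a genuinely different route from the paper. The paper's proof is two lines: it asserts that, ``following the proof of Proposition \ref{prop:firsteig}'', the normalized radial eigenfunction satisfies $v_r \to c_{B_{R_M}}$ in $H^1(B_{\Tilde{R}})$, and then concludes via the trace inequality, $\|v_r - c_{B_{R_M}}\|_{L^2(\partial\Omega_0)} \le C\|v_r - c_{B_{R_M}}\|_{H^1(B_{\Tilde{R}})} \to 0$. You instead work entirely with the explicit formula \eqref{eq:radeigenvf}: you fix the normalizing constant imposed by $\|v_r\|_{L^2(\partial B_{R_M})}=1$, observe the elementary ratio limits $\bigl(1-(r/s)^{n-2}\bigr)/\bigl(1-(r/R_M)^{n-2}\bigr) \to 1$ and $\ln(s/r)/\ln(R_M/r)\to 1$, and upgrade them to uniform convergence on $\partial\Omega_0$ using the fact that $\partial\Omega_0$ stays in an annulus $\{d_1\le|x|\le d_2\}$ at positive distance from the origin; passing the limit under the integral is then immediate. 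What your computation buys is self-containedness and transparency: it yields uniform (indeed quantitative) convergence on $\partial\Omega_0$, and it handles cleanly the one delicate point that the paper glosses over, namely that the normalization here is on the \emph{interior} sphere $\partial B_{R_M}$ rather than on the Steklov boundary, so Proposition \ref{prop:firsteig} (whose proof uses the normalization on $\partial\Omega_0$ together with the variational characterization) does not apply verbatim and needs the rescaling adjustment your explicit constant makes visible. What the paper's compactness-plus-trace argument buys is brevity and a template that would survive in situations where no explicit eigenfunction is available; in the radial setting at hand, however, your direct argument is fully rigorous and arguably preferable.
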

\begin{proof}

  Following the proof of Theorem \ref{prop:firsteig}, we get
    \begin{equation*}
        v_{r}\to c_{B_{R_M}}= \frac{1}{\sqrt{P(B_{R_M})}}\qquad \text{in}\;\; H^1(B_{\Tilde{R}}).
    \end{equation*}
    The thesis easily follows by an application of the trace inequality. Indeed 
    \begin{equation*}
        \|v_{r}-c_{B_{R_M}}\|_{L^2(\partial \Omega_0)}\le C \|v_{r}-c_{B_{R_M}}\|_{H^1(\Omega_0)}\le C \|v_{r}-c_{B_{R_M}}\|_{H^1(B_{\Tilde{R}})}
    \end{equation*}
    and the last term in goes to zero as $r$ goes to zero.
\end{proof}

Now we can prove the desired isoperimetric inequalitiy. 

\begin{proof}[Proof of Theorem \ref{cor:isoperimetricsmallradius1}] If $\Omega_0=B_R$ is a ball, there is nothing to prove. So, let us suppose that $\Omega_0\neq B_R$. Let $A_{r,{R_M}}$ be the spherical shell having the same measure as $\Omega$, and let us consider the eigenfunction $v_{r}$ corresponding to $\sigma_1(A_{r,{R_M}})$, such that $\|v_{r}\|_{L^2(\partial B_{R_M})}=1$. 
Now we can see $v_{r}$ as extended in the spherical shell $A_{r,\Tilde{R}}$ that contains $\Omega_{r}$ and for which \eqref{eq:denominatorisoperimetric} holds. Therefore, we can use $v_{r}$ as a test function in the variational characterization of $\sigma_1(\Omega_{r})$, having
\begin{equation}\label{eq:testfunccarvar}
    \sigma_1(\Omega_{r})\le \frac{\displaystyle\int_{\Omega_{r}}\abs{\nabla v_{r}
    }^2\,dx}{\displaystyle\int_{\partial\Omega_0} v_{r}^2\,d\mathcal{H}^{n-1}}.
\end{equation}
At the numerator we argue as in \cite[Theorem $1.1$]{gavitone2021isoperimetric}. Since the definition of $v_{r}$ in \eqref{eq:radeigenvf}, it is easy to check that $\abs{\nabla v_{r}}^2$ is a radial and decreasing function. Hence, by the Hardy-Littlewood inequality (see \cite{Kes2006}) we get
\begin{equation}\label{eq:numeratorcarvar}
    \int_{\Omega_{r}}\abs{\nabla v_{r}
    }^2\,dx\le\int_{A_{r,{R_M}}}\abs{\nabla v_{r}
    }^2\,dx.
\end{equation}
We stress that the equality case holds if and only if $\Omega_0=B_{R_M}$, therefore in this case we have a strict inequality.
    Regarding the denominator, we use \eqref{eq:denominatorisoperimetric} in Lemma \ref{rem:eigenfunctionradialindependence}.
    For every $\eta>0$, there exists a $r^M_1=r^M_1(\eta)>0$, such that for every $0<r< r^M_1$, then
    \begin{equation}\label{eq:limitdef}
        c^2_{B_{R_M}}P(\Omega_0)-\eta<\int_{\partial\Omega_0} v_{r}^2\,d\mathcal{H}^{n-1}.
    \end{equation}
    In particular, since $\Omega_0\neq B_{R_M}$, we can choose $\eta$ so that
    \begin{equation}\label{eq:choiceofeta}
        \eta = c^2_{B_{R_M}}( P(\Omega_0)-P(B_{R_M}))>0,
    \end{equation}
    which is strictly positive since the isoperimetric inequality holds. By this choice and inequality \eqref{eq:limitdef}, we get 
\begin{equation}\label{eq:denominatorcarvar1}
    \int_{\partial\Omega_0} v_{r}^2\,d\mathcal{H}^{n-1}>c^2_{B_{R_M}}P(\Omega_0)-\eta = c^2_{B_{R_M}}P(B_{R_M})=\int_{\partial B_{R_M}} v_{r}^2\,d\mathcal{H}^{n-1}.
\end{equation}
Eventually, putting \eqref{eq:numeratorcarvar}-\eqref{eq:denominatorcarvar1} in \eqref{eq:testfunccarvar}, we get
    \begin{equation*}
       \sigma_1(\Omega_{r})\le \frac{\displaystyle\int_{\Omega_{r}}\abs{\nabla v_{r}
    }^2\,dx}{\displaystyle\int_{\partial\Omega_0} v_{r}^2\,d\mathcal{H}^{n-1}}<\frac{\displaystyle\int_{A_{r,{R_M}}}\abs{\nabla v_{r}
    }^2\,dx}{\displaystyle\int_{\partial B_{R_M}} v_{r}^2\,d\mathcal{H}^{n-1}}= \sigma_1(A_{r,{R_M}}),
    \end{equation*}
    concluding the proof. Clearly the equality holds if and only if $\Omega_0=B_{R_M}$.
    \end{proof}

\subsection{Results related to $\sigma_2$ and the corresponding eigenfunctions}
 We point out that from now on we will use the following notation:  Let $\varepsilon>0$ be small enough. let $\Omega_{r_\varepsilon}= \Omega_0\setminus \overline{B}_{r_\varepsilon}$, with $B_{r_\varepsilon}$ being the ball of radius $0<{r_\varepsilon}<\varepsilon$ satisfying \eqref{eq:r=o(eps)}, such that $\overline{B}_{r_\varepsilon}\subset \Omega_0$.
\begin{proof}[Proof of Theorem \ref{thm:main}] We divide the proof in four steps.\\

\noindent\textbf{Step 1} \textit{(Upper bound for the $\limsup$)}: Firstly, we prove that \begin{equation}
        \limsup_{\varepsilon\to 0^+} \sigma_2(\Omega_{r_\varepsilon})\le \overline{\sigma}_1(\Omega_0).
    \end{equation}
Let us consider an eigenfunction corresponding to the first non-trivial Steklov eigenvalue in $\Omega_0$ and let us denote it by $\overline{u}_1$. Moreover let us denote by $u_1^{r_\varepsilon}$ a positive eigenfunction corresponding to the first Steklov-Dirichlet eigenvalue in $\Omega_{r_\varepsilon}$. Let $\eta>0$ be fixed (it will be chosen later) and let us choose $\varepsilon$ small enough, such that $A_{{r_\varepsilon},\varepsilon}\subset\{u_1^{r_\varepsilon}<\eta\}$. Let us define
    \begin{equation*}
        u^{\eta}= \max \{\eta, u_1^{r_\varepsilon}\}.
    \end{equation*}
    We stress that for $\eta$ and $\varepsilon$ small enough, since $u_1^{r_\varepsilon}|_{\partial \Omega_0}>0$, we have
    \begin{equation*}
       \;\;\;\;\; u^\eta = u_1^{r_\varepsilon} \qquad \text{on}\;\; \partial \Omega_0.
    \end{equation*}
    We now define the following test function for $\sigma_2(\Omega_{r_\varepsilon})$ \begin{equation}\label{eq:testfunc}
        \varphi = \omega^\varepsilon\frac{\overline{u}_1}{u^{\eta}},
    \end{equation}
    where $\omega_\varepsilon$ is defined in \eqref{eq:omegaeps}. This is a good test function since for $\eta,\varepsilon$ small enough, we get
    \begin{equation*}
        \int_{\partial \Omega_0}\varphi u_1^{r_\varepsilon} = \int_{\partial \Omega_0}\overline{u}_1 = 0.
    \end{equation*}
    Using \eqref{eq:testfunc} in the variational characterization \eqref{eq:carvarSD} of $\sigma_2(\Omega_{r_\varepsilon})$, we get
    \begin{equation*}
     \sigma_2(\Omega_{r_\varepsilon}) \le \frac{\ds\int_\Omega \abs{\nabla \overline{u}_1 \frac{\omega^\varepsilon}{u^{\eta}}+ \overline{u}_1\nabla \bigg(\frac{\omega^\varepsilon}{u^{\eta}}\bigg)}^2\,dx}{\ds\int_{\partial \Omega_0} \frac{\overline{u}_1^2}{(u_1^{r_\varepsilon})^2}}.
    \end{equation*}
Let us begin with the denominator. Since for fixed $\eta>0$ we have that $u_1^{r_\varepsilon}\ge\eta$ for $x \in \partial \Omega_0$, and $\overline{u}_1\in L^2(\partial \Omega_0)$, by the dominated convergence theorem we get
\begin{equation*}
    \int_{\partial \Omega_0} \frac{\overline{u}_1^2}{(u_1^{r_\varepsilon})^2} \to \frac{1}{c_{\Omega_0}^2}\int_{\partial \Omega_0} \overline{u}_1^2,
\end{equation*}
where $c_{\Omega_0}>0$ is the constant which $u_1^{r_\varepsilon}$ converges to a.e. in $\Omega_0$, appearing in Theorem \ref{prop:firsteig}.\\
Let us now estimate the numerator: 
\begin{equation*}
\begin{split}
    \int_\Omega& \abs{\nabla \overline{u}_1 \frac{\omega^\varepsilon}{u^{\eta}}+ \overline{u}_1\nabla \bigg(\frac{\omega^\varepsilon}{u^{\eta,}}\bigg)}^2\,dx\le \int_{\Omega_0} \abs{\nabla \overline{u}_1}^2 \bigg(\frac{\omega^\varepsilon}{u^{\eta}}\bigg)^2\,dx\\
    &+2\int_{\Omega_0}\frac{\overline{u}_1\omega^\varepsilon}{u^{\eta}} \nabla \overline{u}_1\cdot \nabla \bigg(\frac{\omega^\varepsilon}{u^{\eta}}\bigg)\,dx + \int_{\Omega_0}\overline{u}_1^2\abs{\nabla \bigg(\frac{\omega^\varepsilon}{u^{\eta}}\bigg)}^2\,dx\\
    &=: I_1+I_2+I_3.
    \end{split}
\end{equation*}
Since $\omega^\varepsilon\le 1$, $\overline{u}_1$ is bounded in $\Omega_0$ and $u^{\eta}\ge\eta>0$, then using the Cauchy-Schwarz inequality on $I_2$, we have that
\begin{equation*}
    I_2+I_3\le 2\frac{\max_{\Omega_0}\abs{\overline{u}_1}}{\eta}\|\nabla \overline{u}_1\|_{L^2(\Omega_0)}\|\nabla (\omega^\varepsilon/u^{\eta})\|_{L^2(\Omega_0)}+\max_{\Omega_0}\overline{u}_1^2\cdot\|\nabla (\omega^\varepsilon/u^{\eta})\|^2_{L^2(\Omega_0)}.
\end{equation*}
We stress that
\begin{equation}\label{eq:i1i2}
    I_2+I_3\le C_1\|\nabla (\omega^\varepsilon/u^{\eta})\|_{L^2(\Omega_0)}+C_2\|\nabla (\omega^\varepsilon/u^{\eta})\|_{L^2(\Omega_0)}^2,
\end{equation}
where the two constants $C_1$ and $C_2$ are positive and not depending on $\varepsilon$ (but only on $\Omega_0$ and $\eta$).
Now, using the fact that $(x-y)^2\le 2x^2+2y^2$, for every $x,y\in \mathbb R$
\begin{equation*}
\begin{split}
    \|\nabla (\omega^\varepsilon/u^{\eta})\|_{L^2(\Omega_0)} &= \int_{\Omega_0}\abs{\frac{ u^{\eta}\nabla\omega^\varepsilon-\omega^\varepsilon\nabla u^{\eta}}{(u^{\eta})^2}}^2\,dx\\
    &\le 2\int_{\Omega_0}\frac{\abs{\nabla\omega^\varepsilon}^2}{(u^{\eta})^2}\,dx+2\int_{\Omega_0}\frac{(\omega^\varepsilon)^2\abs{\nabla u^{\eta}}^2}{(u^{\eta})^4}\,dx\\
    &=\frac{2}{\eta^2}\int_{A_{{r_\varepsilon},\varepsilon}}\abs{\nabla\omega^\varepsilon}^2\,dx+\frac{2}{\eta^4}\int_{\{u_1^{r_\varepsilon}\ge \,\eta\}}\abs{\nabla u^{r_\varepsilon}_1}^2\,dx.
    \end{split}
\end{equation*}
where in the last line we have used the fact that $\omega^\varepsilon\in[0,1]$ and it is constant outside of $A_{{r_\varepsilon},\varepsilon}$, and $u^{\eta}\ge\eta$ and it is constant in $\{u_1^{r_\varepsilon}<\eta\}$. In particular 
\begin{equation*}
    \|\nabla (\omega^\varepsilon/u^{\eta})\|_{L^2(\Omega_0)} \le\frac{2}{\eta^2}\int_{\Omega_0}\abs{\nabla\omega^\varepsilon}^2 \,dx+\frac{2}{\eta^4}\int_{\Omega_0}\abs{\nabla u_1^{r_\varepsilon}}^2.
\end{equation*}
Since $\nabla u_1^{r_\varepsilon}\to 0$ in $L^2(\Omega_0)$ by Proposition \ref{prop:firsteig} and $\nabla \omega^\varepsilon\to 0$ in $L^2(\Omega_0)$ by Lemma \ref{lemma:omega}, then 
\begin{equation*}
    \|\nabla (\omega^\varepsilon/u^{\eta})\|_{L^2(\Omega_0)} \to 0 \qquad \text{as}\;\; \varepsilon\to 0^+.
\end{equation*}
This fact and \eqref{eq:i1i2} imply
\begin{equation*}
    \lim_{\varepsilon\to 0^+} I_2+I_3 \le 0. 
\end{equation*}
Let us now observe the behaviour of $I_1$ with respect to the limit. We observe that 
\begin{equation*}
    \lim_{\varepsilon\to 0^+} u^{\eta} = \max\{\eta, c_{\Omega_0}\}\ge c_{\Omega_0}, \qquad \text{a.e. in }\; \Omega_0.
\end{equation*}
Moreover, for $\eta>0$, we have
\begin{equation*}
    \abs{\nabla \overline{u}_1}^2 \bigg(\frac{\omega^\varepsilon}{u^{\eta}}\bigg)^2 \le \frac{\abs{\nabla \overline{u}_1}^2}{\eta^2}\in L^1(\Omega_0),
\end{equation*}
so that by the dominated convergence theorem we get
\begin{equation*}
    \lim_{\varepsilon\to 0^+} I_1 \le \frac{1}{c_{\Omega_0}^2}\int_{\Omega_0}\abs{\nabla \overline{u}_1}^2\,dx.
\end{equation*}
Eventually, we get
\begin{equation*}
    \limsup_{\varepsilon\to 0^+}\sigma_2(\Omega_{r_\varepsilon}) \le \lim_{\varepsilon\to 0^+} (I_1+I_2+I_3) \le \frac{\int_{\Omega_0}\abs{\nabla \overline{u}_1}^2\,dx}{\int_{\partial \Omega_0} \overline{u}_1^2} = \overline{\sigma}_1(\Omega_0).
\end{equation*}
\textbf{Step 2} \textit{(Uniform equiboundedness)}:     We stress that Step $1$ allows us to give an upper bound for $\sigma_2(\Omega_{r_\varepsilon})$ when $r_\varepsilon$ is small enough. Indeed by the definition of limit we have that for every fixed $\varepsilon_0>0$, there exists a $r_0>0$, such that for every $0<r_\varepsilon<r_0$, we get
    \begin{equation*}
        \sigma_2(\Omega_{r_\varepsilon}) \le \overline{\sigma}_1(\Omega_0)+\varepsilon_0\le \overline{\sigma}_1(B_R)+\varepsilon_0= \frac{1}{R}+\varepsilon_0,
    \end{equation*}
where we have applied the Weinstock inequality and $B_R$ is the ball having the same perimeter as $\Omega_0$.\\
    
   \noindent \textbf{Step 3} \textit{(Convergence of the eigenvalues)}:
 By \textit{Step $1$} and \textit{Step $2$}, we have that for small radii of the inner ball $B_{r_\varepsilon}$, $\sigma_2(\Omega_{r_\varepsilon})$ is uniformly bounded from above. Moreover it is clear that $\sigma_2(\Omega_{r_\varepsilon})>\sigma_1(\Omega_{r_\varepsilon})>0.$ Since the sequence of positive real number $\{\sigma_2(\Omega_{r_\varepsilon})\}_\varepsilon$ is bounded for small ${\varepsilon}>0$, there exists a subsequence, that we will denote in the same way, which converges to its $\liminf$, i.e.
\begin{equation*}
    \liminf_{\varepsilon\to 0^+}\sigma_2(\Omega_{r_\varepsilon}) = \Tilde{\sigma}.
\end{equation*}
Moreover, by the variational characterization of $\sigma_2(\Omega_{r_\varepsilon})$, if we assume $\|u_2^{r_\varepsilon}\|_{L^2(\partial \Omega_0)}=1$, we have that $\|\nabla u_2^{r_\varepsilon}\|_{L^2(\Omega_0)}\le C $ for every $\varepsilon$ small enough. Extending to $0$ in $B_{r_\varepsilon}$,
Friederich's inequality \eqref{friedin} guarantees that $\{u_2^{r_\varepsilon}\}_\varepsilon$ is equibounded in $L^2(\Omega_0)$. Therefore, there exists a subsequence, still denoted by $\{u_2^{r_\varepsilon}\}_\varepsilon$, that converges strongly in $L^2(\Omega_0)$ to some $\bar{u}\in H^1(\Omega_0)$ and such that $\nabla u_2^{r_\varepsilon} \rightharpoonup \nabla \bar{u}$ weakly in $L^2(\Omega_0)$. Moreover, by the compactness of the trace operator we have that $u_2^{r_\varepsilon}$ converges strongly in $L^2(\partial \Omega_0)$ and almost everywhere in $\partial \Omega_0$.\\
    Let us use as a test function in the weak formulation \eqref{eq:WeakSteklovdirichlet} of $\sigma_2(\Omega_{r_\varepsilon})$ the following
    \begin{equation*}
        \varphi= \omega^\varepsilon \psi, \qquad \psi \in H^1(\Omega_0),
    \end{equation*}
    getting
    
    \begin{equation}\label{eq:varcarsigma2} \int_{\Omega_0}\psi\nabla u_2^{r_\varepsilon}\cdot \nabla \omega^\varepsilon\,dx+\int_{\Omega_0}\omega^\varepsilon\nabla u_2^{r_\varepsilon}\cdot \nabla \psi\,dx = \sigma_2(\Omega_{r_\varepsilon})\int_{\partial \Omega_0} u_2^{r_\varepsilon}\psi\,d\mathcal{H}^{1}.
    \end{equation}
    Since $\nabla u_2^{r_\varepsilon}$ converges weakly in $L^2(\Omega_0)$ to $\nabla \bar{u}$ and $\psi\nabla \omega^\varepsilon$ strongly to $0$, then $\nabla \omega^\varepsilon\rightharpoonup 0$ weakly in $L^2(\Omega_0)$, implying that the first integral goes to zero. Moreover, the strong convergence in $L^2(\Omega_0)$ of $\omega^\varepsilon\nabla\psi$ to $\nabla\psi$, allows us to say that
    \begin{equation*}
        \lim_{\varepsilon\to 0^+} \int_{\Omega_0}\omega^\varepsilon\nabla u_2^{r_\varepsilon}\cdot \nabla \psi\,dx = \int_{\Omega_0}\nabla \bar{u}\cdot \nabla \psi\,dx.
    \end{equation*}
    In addition, the strong convergence of the traces of $u_2^{r_\varepsilon}$ implies that
    \begin{equation*}
        \lim_{\varepsilon\to 0^+}\int_{\partial \Omega_0} u_2^{r_\varepsilon}\psi\,d\mathcal{H}^{n-1}= \int_{\partial \Omega_0} \bar{u}\psi\,d\mathcal{H}^{n-1}.
    \end{equation*}
     Eventually, passing to the $\liminf$ in \eqref{eq:varcarsigma2}, as $\varepsilon\to 0^+$, we get
    \begin{equation}\label{eq:weaktildeu}
    \int_{\Omega_0}\nabla \bar{u}\cdot \nabla \psi\,dx = \Tilde{\sigma}\int_{\partial \Omega_0} \bar{u}\psi\,d\mathcal{H}^{n-1}, \qquad \forall\psi \in H^1(\Omega_0).
    \end{equation}
    The orthogonality condition yields the following (since the first eigenfunction $u_1^{r_\varepsilon}$ converges in $L^2(\partial \Omega_0)$ to the positive constant $c_{\Omega_0}$)
    \begin{equation}\label{eq:tracetildeu}
        0=\int_{\partial \Omega_0}u_2^{r_\varepsilon}u_1^{r_\varepsilon}\,d\mathcal{H}^{n-1}\to c_{\Omega_0}\int_{\partial \Omega_0}\bar{u}\,d\mathcal{H}^{n-1}.
    \end{equation}
     We stress that $\Tilde{\sigma}>0$ since, if it was zero, equation \eqref{eq:weaktildeu} would imply $\bar{u}$ constant, and equation \eqref{eq:tracetildeu} would tell us $\bar{u}=0$ on $\partial \Omega_0$. But this is not possible since the normalization condition holds.  These information tell us that $\Tilde{\sigma}$ belongs to the spectrum of the Neumann-to-Dirichlet operator and it is not zero, i.e. $\bar{\sigma}_1(\Omega_0)\le\bar{\sigma}$. The third step is concluded, since
    \begin{equation*}
        \overline{\sigma}_1(\Omega_0)\le\Tilde{\sigma}= \liminf_{\varepsilon\to 0^+}\sigma_2(\Omega_{r_\varepsilon}) \le \limsup_{\varepsilon\to 0^+}\sigma_2(\Omega_{r_\varepsilon}) \le \overline{\sigma}_1(\Omega_0).
    \end{equation*}
  
        \noindent \textbf{Step 4} \textit{(Convergence of the eigenfunctions)}: To summarize, we have proved that the sequence $\{u_2^{r_\varepsilon}\}$ (extended to $0$ in $B_{r_\varepsilon}$) is such that:
        \begin{enumerate}
            \item[$1)$] converges strongly in $L^2(\Omega_0)$ to $\bar{u}$, i.e. $\|u_2^{r_\varepsilon}-\bar{u}\|_{L^2(\Omega_0)}\to 0$;\\
            \item[$2)$] their gradients converge weakly in $L^2(\Omega_0)$ to $\nabla\bar{u}$;\\
            \item[$3)$] their traces converge strongly in $L^2(\partial\Omega_0)$ to $\bar{u}$, i.e. $\|u_2^{r_\varepsilon}-\bar{u}\|_{L^2(\partial\Omega_0)}\to 0$.\\
        \end{enumerate}
        In particular, by the weak formulation \eqref{eq:weaktildeu} and the fact that $\Tilde{\sigma}=\overline{\sigma}_1(\Omega_0)$, $\bar{u}$ must be an eigenfunction corresponding to $\overline{\sigma}_1(\Omega_0)$. Moreover, by the strong convergence of the traces, we have $\|\bar{u}\|_{L^2(\partial \Omega_0)}=1$. What remains, for proving the strong convergence in $H^1(\Omega_0)$, is the strong convergence of the gradients. However, this follows again from the weak formulation \eqref{eq:weaktildeu} and the strong convergence of the traces. Indeed, considering $u_2^{r_\varepsilon}$ as a test function in \eqref{eq:WeakSteklov}, we get
       \begin{equation}\label{eq:weakbaru}
    \int_{\Omega_0}\nabla \bar{u}\cdot \nabla u_2^{r_\varepsilon}\,dx = \overline{\sigma}_1(\Omega_0)\int_{\partial \Omega_0} \bar{u}u_2^{r_\varepsilon}\,d\mathcal{H}^{n-1}.
    \end{equation}
    Hence
    \begin{equation*}
        \begin{split}
            \|\nabla u_2^{r_\varepsilon}-\nabla \bar{u}\|^2_{L^2(\Omega_0)}&= \|\nabla u_2^{r_\varepsilon}\|^2_{L^2(\Omega_0)}+ \|\nabla \bar{u}\|^2_{L^2(\Omega_0)}-2\int_{\Omega_0}\nabla \bar{u}\cdot \nabla u_2^{r_\varepsilon}\,dx\\
            &= \sigma_2(\Omega_{r_\varepsilon})+ \overline{\sigma}_1(\Omega_0)-2\,\overline{\sigma}_1(\Omega_0)\int_{\partial \Omega_0} \bar{u}u_2^{r_\varepsilon}\,d\mathcal{H}^{n-1}.
        \end{split}
    \end{equation*}
    Since by \textit{Step 3} we have $\sigma_2(\Omega_{r_\varepsilon})\to \overline{\sigma}_1(\Omega_0)$ and $\int_{\partial \Omega_0} \bar{u}u_2^{r_\varepsilon}\,d\mathcal{H}^{n-1}\to \int_{\partial \Omega_0} \bar{u}^2\,d\mathcal{H}^{n-1}=1$, we arrive to the conclusion
    \begin{equation*}
        \|\nabla u_2^{r_\varepsilon}-\nabla \bar{u}\|_{L^2(\Omega_0)}\to 0,
    \end{equation*}
    and therefore the convergence in $H^1(\Omega_0)$.
\end{proof}

\begin{proof}[Proof of Corollaries \ref{thm:isoperimetric1}-\ref{thm:isoperimetric2}:] 
We prove it only in the case of a perimeter constraint, since the one with a measure constraint follows identically applying the Brock inequality \eqref{eq:brock} instead of the Weinstock inequality \eqref{eq:weinstock} .\\
Let $R_P>0$ be the radius of the ball having the same perimeter as $\Omega_0$. If $\Omega_r = A_{r,R_P}$ there is nothing to prove. Let us suppose that $\Omega_r \neq A_{r,R_P}$. By Theorem \eqref{thm:main}, we have that 
\begin{equation}\label{eq:deflim}
    \forall \varepsilon>0, \exists r_2^P : \; \forall 0<r<r_2^P \implies \sigma_2(\Omega_r)<\overline{\sigma}_1(\Omega_0)+\varepsilon.
\end{equation}
Moreover, since the Weinstock inequality \eqref{eq:weinstock} holds as an equality if and only if $\Omega_0=B_{R_P}$, there exists a  $\overline{\varepsilon}>0$, such that
\begin{equation}\label{eq:weinspace}
    \overline{\sigma}_1(\Omega_0)\le \overline{\sigma}_1(B_{R_P})-\overline{\varepsilon}.
\end{equation}
    Now we can choose $\varepsilon\le\overline{\varepsilon}$. Putting together \eqref{eq:deflim} and \eqref{eq:weinspace}, we conclude that
    \begin{equation*}
        \sigma_2(\Omega_r)<\overline{\sigma}_1(B_{R_P})+(\varepsilon-\overline{\varepsilon})< \sigma_2(A_{r,R_P}),
    \end{equation*}
where in the last inequality we have applied Proposition \ref{prop:convergenceeigrad}. 
    
\end{proof}

\section*{Acknowledgments}
This work has been partially supported  by GNAMPA of INdAM.

\section*{Conflicts of interest and data availability statement}
The author declares that there is no conflict of interest. Data sharing not applicable to this article as no datasets were generated or analyzed during the current study.

\bibliographystyle{plain}
\bibliography{biblio}

\end{document}